\documentclass[11pt]{amsart}

\usepackage{amsmath,amsthm, amscd, amssymb, amsfonts}
\usepackage[dvips, dvipsnames, usenames]{color}
\usepackage{eucal}
\usepackage[all]{xy}
\usepackage{hhline}
\usepackage{verbatim}
\usepackage{longtable}
\usepackage[T1]{fontenc}

\newcommand{\Cent}{\operatorname{C}}

\newcommand{\Imm}{\operatorname{Im}}

\newcommand{\ord}{\operatorname{ord}}

\newcommand\toba{{\mathfrak B }}

\newcommand{\trid}{\triangleright}

\newcommand{\Z}{{\mathbb Z}}
\newcommand{\N}{{\mathbb N}}

\newcommand{\C}{{\mathcal C}}
\newcommand{\cl}{{\mathcal C}_{\ell}}
\newcommand{\D}{{\mathcal D}}

\newcommand{\q}{{\mathbf q}}

\newcommand{\oc}{{\mathcal O}}

\newcommand{\Aut}{\operatorname{Aut}}
\newcommand{\Out}{\operatorname{Out}}


\theoremstyle{plain}

\newtheorem{lema}{Lemma}[section]
\newtheorem{theorem}[lema]{Theorem}
\newtheorem{cor}[lema]{Corollary}

\newtheorem{prop}[lema]{Proposition}

\newtheorem{question}{Question}

\theoremstyle{definition}

\theoremstyle{remark}
\newtheorem{obs}[lema]{Remark}

\newcommand\id{\operatorname{id}}

\newcommand\dn{\mathbb D_n}
\newcommand\an{\mathbb A_n}

\newcommand\A{\mathbb A}

\newcommand\s{\mathbb S}

\def\pf{\begin{proof}}
\def\epf{\end{proof}}

\theoremstyle{remark}

\begin{document}

\renewcommand{\baselinestretch}{1.2}

\thispagestyle{empty}

\title[Twisted homogeneous racks of type D]
{On Twisted homogeneous racks of type D}

\author[Andruskiewitsch, Fantino, Garc\'\i a, Vendramin]{N. Andruskiewitsch,
F. Fantino, G. A. Garc\'\i a, L. Vendramin}

\address{\noindent N. A., F. F., G. A. G.: Facultad de Matem\'atica, Astronom\'{\i}a y F\'{\i}sica,
Universidad Nacional de C\'ordoba. CIEM -- CONICET. 
Medina Allende s/n (5000) Ciudad Universitaria, C\'ordoba,
Argentina}
\address{\noindent F. F., G. A. G.: Facultad de Ciencias Exactas,
F\'isicas y Naturales, Universidad Nacional de C\'ordoba. Velez
Sarsfield 1611 (5000) Ciudad Universitaria, C\'ordoba, Argentina}
\address{\noindent L. V. : Departamento de Matem\'atica -- FCEyN,
Universidad de Buenos Aires, Pab. I -- Ciudad Universitaria (1428)
Buenos Aires -- Argentina}
\address{\noindent L. V. : Instituto de Ciencias, Universidad de Gral. Sarmiento, J.M. Gutierrez
1150, Los Polvorines (1653), Buenos Aires -- Argentina}
\address{}

\email{(andrus, fantino, ggarcia)@famaf.unc.edu.ar} \email{lvendramin@dm.uba.ar}

\thanks{This work was partially supported by ANPCyT-Foncyt, CONICET, Ministerio de Ciencia y
Tecnolog\'{\i}a (C\'ordoba), Secyt-UNC and Secyt-UBA}

\subjclass[2000]{16W30; 17B37}
\date{\today}

\begin{abstract}
We develop some techniques to check when a twisted homogeneous
rack of class $(L,t,\theta)$ is of type D. Then we apply the
obtained results to the cases $L=\A_n$, $n\geq 5$, or $L$ a
sporadic group.
\end{abstract}

\maketitle

\begin{center}
\emph{Dedicado a Hans-J\"urgen Schneider en ocasi\'on de sus 65
años.}
\end{center}

\section{Introduction}

A \emph{rack} is a pair $(X,\trid)$ where $X$ is a non-empty set
and $\trid:X\times X\to X$ is an operation such that the map
$\varphi_x = x\trid \underline{\quad}$ is bijective for any $x\in
X$, and $x \trid(y\trid z) = (x\trid y) \trid(x\trid z)$ for all
$x,y,z\in X$. For instance, a conjugacy class in a group is a rack
with $\trid$ given by conjugation. Racks appear naturally in the
classification problem of finite-dimensional pointed Hopf
algebras, see \cite{AG-adv}. In this sense, our general aim is to
solve the following question. We omit the definitions of
\emph{cocycle over a rack} and \emph{Nichols algebra
$\toba(X,\q)$} because these are not needed in the present paper,
an omission justified by Theorem \ref{th:racks-typeD} below.

\begin{question}\label{que:racks}
For any finite  rack $X$, and for any cocycle $\q$, determine if $\dim \toba(X, \q)
< \infty$.
\end{question}

\medbreak A rack  $X$ is \emph{of type D} if there exists  a
    decomposable subrack $Y = R\coprod S$ of $X$ such that
    \begin{equation}\label{eqn:hypothesis-subrack}
        r\trid(s\trid(r\trid s)) \neq s, \quad \text{for some } r\in R, s\in S.
    \end{equation}

The reduction of certain problems in the classification of
finite-dimensional pointed Hopf algebras to the study of racks of
type D is given by the following result, a reinterpretation of
\cite[Thm. 8.6]{HS1} in turn a consequence of \cite{AHS}.

\begin{theorem}\label{th:racks-typeD} \cite[Th. 3.6]{AFGV}
    If $X$ is a finite rack of type D, then $X$  collapses, that is $\dim\toba(X,\q)
=\infty$ for any $\q$. \qed
\end{theorem}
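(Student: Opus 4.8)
The plan is to derive the statement from the structure theory of Nichols algebras of semisimple Yetter--Drinfeld modules in \cite{AHS} together with the rank-two root-system analysis of \cite{HS1}; the argument can be made uniform in $\q$ precisely because the type D hypothesis \eqref{eqn:hypothesis-subrack} is a condition on the rack alone. The first step is a monotonicity principle for subracks: if $Y\subseteq X$ is a subrack, then the braided vector space attached to $(Y,\q|_Y)$ sits as a braided subspace of the one attached to $(X,\q)$, and the Nichols algebra functor sends this inclusion to an inclusion of braided Hopf algebras $\toba(Y,\q|_Y)\hookrightarrow\toba(X,\q)$. Consequently $\dim\toba(Y,\q|_Y)=\infty$ forces $\dim\toba(X,\q)=\infty$, so I may replace $X$ by the decomposable subrack $Y=R\coprod S$.

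Decomposability means $R\trid S\subseteq S$ and $S\trid R\subseteq R$, so the braided vector space $V$ attached to $(Y,\q)$ splits as $V=V_R\oplus V_S$ with each summand a braided subspace and with the braiding exchanging the mixed factors, $c(V_R\otimes V_S)\subseteq V_S\otimes V_R$ and $c(V_S\otimes V_R)\subseteq V_R\otimes V_S$. This is exactly a semisimple Yetter--Drinfeld module of rank two, the setting of \cite{AHS}. If $\toba(V)$ were finite-dimensional, the generalized root system of $V$ would be finite and the reflection attached to the block $V_R$ would be defined; finiteness then forces the chain $V_S,\ (\ad_c V_R)V_S,\ (\ad_c V_R)^2 V_S,\dots$ to terminate after finitely many steps, a condition which in rank two is governed by the eigenvalues of the double braiding $c^2$ on $V_R\otimes V_S$.

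The core is to show that \eqref{eqn:hypothesis-subrack} obstructs this termination for every $\q$. Realizing the rack inside its enveloping group, where $\trid$ is conjugation, a direct computation rewrites \eqref{eqn:hypothesis-subrack} as $(rs)^2\neq (sr)^2$, i.e. $s$ does not commute with $(rs)^2$. I would read this off the double braiding: the return of $x_s$ to itself under the relevant power of $c$ on the subspace spanned by $x_r,x_s$ and their $\trid$-orbit is exactly the equality $(rs)^2=(sr)^2$, so its failure means that $c^2$ carries the relevant eigenvector to a linearly independent one and the reflection chain of the previous paragraph cannot close up. In the language of \cite{HS1} this exhibits inside $\toba(V)$ a rank-two diagonal braided subspace whose Cartan datum lies outside Heckenberger's finite list, whence the corresponding Nichols subalgebra, and so $\toba(V)$, is infinite-dimensional. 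Because the structure constants appearing are monomials in the cocycle values, they never vanish, and this is what renders the conclusion independent of $\q$; combined with the subrack embedding it gives $\dim\toba(X,\q)=\infty$.

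The main obstacle is this last translation: turning the set-theoretic inequality $(rs)^2\neq (sr)^2$ into the spectral statement about $c^2$ in a way that is genuinely uniform in $\q$. One must verify that the coefficients produced by iterating $\ad_c x_r$ on $x_s$ are nonzero monomials in the cocycle (so that no cancellation can be arranged by a clever choice of $\q$), and that the two-dimensional diagonal datum extracted from the $\trid$-orbit of $\{r,s\}$ really falls in the infinite region of the rank-two classification, whatever roots of unity the cocycle assigns on the diagonal. Once this bookkeeping is in place, the remainder is the formal subrack reduction together with a direct appeal to the already-established semisimple Nichols algebra theory of \cite{AHS} and \cite{HS1}.
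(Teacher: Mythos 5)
The paper does not actually prove this statement: it is quoted from \cite[Thm.\ 3.6]{AFGV}, which the introduction describes as a reinterpretation of \cite[Thm.\ 8.6]{HS1}, in turn a consequence of \cite{AHS}. Your plan reconstructs exactly that chain, and its skeleton is sound: the subrack monotonicity of Nichols algebras, the realization of the decomposable subrack $Y=R\coprod S$ as a rank-two semisimple Yetter--Drinfeld module over (a quotient of) the enveloping group, and the translation of \eqref{eqn:hypothesis-subrack} into $r s r s r^{-1}s^{-1}r^{-1}\neq s$, i.e.\ $(rs)^2\neq (sr)^2$, are all correct and are precisely the content of the cited result. Where you go astray is in the proposed mechanism for the core step. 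The theorem of \cite{HS1} that is being invoked says directly that if $\toba(M_1\oplus M_2)$ is finite-dimensional then $(rs)^2=(sr)^2$ for all $r\in\supp M_1$, $s\in\supp M_2$; it is proved by the Weyl-groupoid and reflection machinery, not by extracting a rank-two \emph{diagonal} braided subspace lying outside Heckenberger's finite list, and your paragraph reading the identity off eigenvalues of $c^2$ is speculative. You correctly flag this as the main obstacle, and it is a genuine one: making that spectral translation uniform in $\q$ would amount to re-proving \cite[Thm.\ 8.6]{HS1} by a different method, and there is no guarantee the nonvanishing and classification bookkeeping you describe can be completed as stated. If you instead invoke \cite[Thm.\ 8.6]{HS1} as a black box at that point --- which is what \cite{AFGV} and this paper do --- your argument closes; as written, the proof is incomplete exactly where you say it is.
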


Therefore, it is very important to determine all simple racks of type D.
Indeed, any indecomposable rack $Z$ admits a rack
epimorphism $\pi: Z \to X$ with $X$ simple; and
if $X$ is of type D, then $Z$ is of type D.

\medbreak Now, finite simple racks have been classified in
\cite[3.9, 3.12]{AG-adv}, see also \cite{jo}. Succinctly, any
simple rack belongs to one of the following classes:

\begin{enumerate}

    \item Simple affine racks.

    \item Non-trivial (twisted) conjugacy classes in simple groups.

    \item\label{item:twisted-homogeneous} \emph{Simple twisted homogeneous racks} of class $(L,t, \theta)$; these
    are twisted conjugacy classes corresponding to $(G,u)$, where
\end{enumerate}

    \begin{itemize}
      \item $G = L^t$, with $L$ a finite simple non-abelian group, $t > 1$ and  $\theta\in\Aut(L)$,
      \item $u\in \Aut (L^t)$ acts by
        $$u(\ell_1,\ldots,\ell_t)=(\theta(\ell_t),\ell_1,\ldots,\ell_{t-1}), \quad \ell_1,\ldots,\ell_t \in L.$$
    \end{itemize}
 See Subsection \ref{subect:thr} below.

\medbreak The goal of the present paper is to study when a simple
twisted homogeneous rack, that is case
\eqref{item:twisted-homogeneous}, is of type D. We first show that
twisted homogeneous racks (THR, for short) of class $(L,t,
\theta)$ are parameterized by twisted conjugacy classes in $L$
with respect to $\theta$, see Proposition
\ref{prop:twisted-ger-par}. For example, assume that $L=\A_n$,
$n\geq 5$, and $\theta=\iota_{(1\;2)}$ (conjugation in $\s_n$).
Then the THR of type $(\A_n,t,\iota_{(1\;2)})$ are parameterized
by the conjugacy classes in $\s_n$ not contained in $\A_n$; this
explain the notation in Table \ref{tab:MainThm:An:noid}. Then we
develop some techniques to check that a twisted homogeneous rack
is of type D, see Section \ref{section:general-techniques};
neither of these results requires the simplicity of $L$. Our main
results are proved in Section \ref{sect:simple}:

\begin{theorem}\label{th:thr-typeD}
    Let $L$ be $\an$, $n\ge 5$, $\theta\in\Aut(L)$, $t \ge 2$ and $\ell \in L$. If  $\cl$ is a twisted
homogeneous rack of class $(L, t, \theta)$ not listed in Tables
\ref{tab:MainThm:An:id}, \ref{tab:MainThm:An:noid}, then $\cl$ is of type D.
\end{theorem}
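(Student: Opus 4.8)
The plan is to reduce the statement to a computation with cycle types inside $\s_n$ and then dispatch all but finitely many classes by a uniform construction. By Proposition~\ref{prop:twisted-ger-par} we may identify $\cl$ with the $\theta$-twisted conjugacy class of $\ell$ in $L=\an$. For $n\neq 6$ every automorphism of $\an$ is conjugation by some $\sigma\in\s_n$, so $\theta=\iota_\sigma$, and the natural invariant attached to the twisted class is the ordinary $\s_n$-conjugacy class of the ``total'' element $\sigma_0:=\ell\sigma$; this class lies in $\an$ exactly when $\sigma$ is even. I would therefore split the argument into the two cases that label the two tables, namely $\sigma_0\in\an$ (Table~\ref{tab:MainThm:An:id}) and $\sigma_0\in\s_n\setminus\an$ (Table~\ref{tab:MainThm:An:noid}), treating the exotic automorphism arising at $n=6$ as a separate finite check.

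The core mechanism is to feed the general techniques of Section~\ref{section:general-techniques} a decomposable subrack $R\coprod S\subseteq\cl$ satisfying \eqref{eqn:hypothesis-subrack}. Since $t\ge 2$, I have genuine room to distribute the supports of the two halves among the $t$ coordinates of $L^t$, which is precisely what lets me force $Y=R\coprod S$ to be decomposable rather than indecomposable; the two elements $r,s$ can then be chosen so that the verification of $r\trid(s\trid(r\trid s))\neq s$ collapses, via the cyclic structure of $u$ inside $L^t\rtimes\langle u\rangle$, to a single inequality among $\ell$, $\theta$ and a conjugating element in $L$. In this way I expect the type~D property of $\cl$ to follow from the type~D property of the $\s_n$-class (or $\an$-class) of $\sigma_0$, which is available from the earlier analysis of symmetric and alternating groups.

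With the reduction in place, the remaining work is the case analysis over the cycle type of $\sigma_0$. For each cycle type I would exhibit explicit permutations realizing $r$ and $s$, check that the subrack they generate splits, and verify the inequality; the generic cycle types, those containing a sufficiently long cycle or enough distinct cycle lengths to separate a subrack, are handled by one or two uniform families of choices. The finitely many cycle types for which no such choice exists, essentially the small-support classes together with a few low-degree coincidences, are exactly those collected in Tables~\ref{tab:MainThm:An:id} and \ref{tab:MainThm:An:noid}, so that they are legitimately excluded from the conclusion.

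The main obstacle, I expect, is twofold. First, proving decomposability of the candidate $R\coprod S$: the type~D inequality is cheap to satisfy, but one must rule out that $r$ and $s$ secretly lie in a common orbit, and this is where the room afforded by $t\ge 2$ and the precise form of $u$ must be used with care. Second, the borderline cycle types near the boundary of the tables: there the uniform construction fails and one must either produce an ad hoc subrack or certify by a direct, possibly computer-assisted, computation that the class is genuinely not of type~D, confirming that the excluded list is complete and sharp. The degenerate low-degree cases, $n$ small and $n=6$ with its extra automorphism, will likewise require individual attention.
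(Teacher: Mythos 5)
Your proposal is a plan rather than a proof, and its central reduction is not one that works. You write that you ``expect the type~D property of $\cl$ to follow from the type~D property of the $\s_n$-class (or $\an$-class) of $\sigma_0$,'' but this is false as a general mechanism: for $\ell=e$ the class $\oc^{L}_{e}$ is a point and trivially not of type D, yet $\C_e$ is of type D for most $t$; conversely, the passage from $\oc^{L^\theta}_{\ell}$ to $\cl$ via the diagonal is only available when $\ell=a^t$ for some $a\in L^\theta$, and in the paper it is used essentially only for $t$ odd and $\ell$ an involution (Proposition~\ref{prop:summary}~(iii)). The actual engines of the proof are absent from your sketch: (a) the affine subrack $X=\{(x^{a_1},\dots,x^{a_{t-1}},\ell x^{a_t}):\sum_i a_i\equiv 0 \bmod k\}$ of Lemma~\ref{lema:twisted-D-todo-t}, built from an element $x$ with $\theta(\ell x\ell^{-1})=x$ and $(\ord(x),t)\neq 1$, which is isomorphic to the affine rack $(A,g)$ of Lemma~\ref{lema:A-g-tipo-D} and whose decomposability comes from $\id-g$ failing to be invertible, not from any property of the class of $\sigma_0$; and (b) for $\ell$ quasi-real (or $\theta$-quasi-real) of type $j$, the pair of subracks with coordinate-exponent sums $j$ and $1$ in Lemma~\ref{lema:twisted-D-todo-t-2}, which handles all $\ell$ of order $>2$ uniformly. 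Without these (or equivalent) constructions, your assertion that decomposability can be forced ``by distributing supports among the $t$ coordinates'' has no content, and the case analysis you defer to the end is in fact the entire proof.

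A second, smaller point: you correctly observe the issues at $n=6$ and the need for computer checks at the boundary, but you give no argument for why the excluded list is exactly the one in Tables~\ref{tab:MainThm:An:id} and \ref{tab:MainThm:An:noid}; the matching of the failure set of the uniform constructions with those tables (e.g.\ why $\ord(\ell)=4$ with $t=2$ escapes the quasi-real lemma, or why the involution types $(1,2^2)$, $(1^2,2^2)$, $(2^4)$ survive for $t$ odd) is precisely where the arithmetic conditions $(k,t)\neq 1$, $k\neq 2,4$ for $t=2$, $k>2$ for $t=4$, and $\ord(\ell)\nmid 2(1-j)$ enter, and none of these appears in your outline. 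As written, the proposal identifies the right parameterization (Proposition~\ref{prop:twisted-ger-par}) but misses the key idea and cannot be completed along the route you describe.
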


\begin{theorem}\label{th:thr-typeD-espo}
Let $L$ be a sporadic group, $\theta= \id$, $t \ge 2$ and $\ell \in L$. If  $\cl$ is a twisted
homogeneous rack of class $(L, t, \theta)$ not listed in Table
\ref{tab:MainThm:espo:id}, then $\cl$ is of type D.
\end{theorem}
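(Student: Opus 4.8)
The plan is to convert the statement into a finite, essentially computational question about conjugacy classes of $L$, by combining the parameterization of twisted homogeneous racks with the criteria of Section \ref{section:general-techniques}. Since $\theta=\id$, the $\theta$-twisted conjugacy classes of $L$ are just the ordinary conjugacy classes, so Proposition \ref{prop:twisted-ger-par} lets me index the THR by the conjugacy class $\Oc_{\ell}$ of $\ell$ in $L$. First I would make this identification completely explicit in coordinates: realize $\cl$ as the conjugacy class of an element of the form $(g,u)$ inside $G=L^t\rtimes\langle u\rangle$, whose ``holonomy'' $g_1\cdots g_t$ lies in $\Oc_{\ell}$, and write the rack operation as conjugation in $G$. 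Because the underlying rack is a conjugation rack, the type-D inequality $r\trid(s\trid(r\trid s))\neq s$ is equivalent to $(rs)^2\neq(sr)^2$ in $G$, and the purpose of the coordinate bookkeeping is to rewrite this word inequality purely in terms of products of elements of $L$.

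Second, I would invoke the general techniques of Section \ref{section:general-techniques} to extract a sufficient condition on $\Oc_{\ell}$ that is \emph{uniform in} $t$. The expected shape of the reduction is: given a pair of ``seeds'' $x,y\in L$ lying in classes controlled by $\Oc_{\ell}$, one builds elements $r,s\in\cl$ (with a common $u$-component) so that decomposability of the subrack they generate is forced by $x,y$ lying in distinct orbits under the inner action — for instance because they have distinct images under some homomorphism or lie in different cosets of a normal subgroup of $\langle x,y\rangle$ — while $(rs)^2\neq(sr)^2$ reduces to a word inequality in $x,y$ inside $L$. The content of these lemmas is that if such a pair exists then $\cl$ is of type D for \emph{every} $t\ge 2$ simultaneously. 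This turns the theorem into the assertion: for each sporadic $L$ and each class $\Oc_{\ell}$ not appearing in Table \ref{tab:MainThm:espo:id}, such a pair exists.

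Third comes the verification, carried out group-by-group with the ATLAS and a computer algebra system. For the sporadic groups admitting a faithful permutation or matrix representation of manageable degree, this is a direct search: fix a representative $\ell$, run over representatives of the candidate partner classes, and test both the decomposability condition and $(rs)^2\neq(sr)^2$. The classes for which the search returns no witness are collected and should coincide exactly with the entries of Table \ref{tab:MainThm:espo:id}; for those finitely many exceptions one checks that the criterion genuinely fails (no admissible subrack satisfies the inequality), so they are not claimed to be of type D. Everything outside the table then yields type D for all $t\ge 2$.

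The main obstacle is the handful of groups — above all the Baby Monster and the Monster — where enumerating pairs of elements is infeasible. For these I would replace the brute-force search by character-theoretic and structural arguments: use the class-multiplication (structure) constants read off the ATLAS character table to certify that $\Oc_{\ell}$ contains pairs $r,s$ whose product $rs$ has a prescribed order, chosen so that $(rs)^2\neq(sr)^2$ is forced, and establish decomposability via a known maximal subgroup or a power-map invariant that separates the two seeds. The delicate point is ensuring that the uniform-in-$t$ reduction of the second step is robust enough that these character-theoretic witnesses remain valid for all $t\ge 2$ at once; once that is secured, the remaining effort is the systematic tabulation that produces Table \ref{tab:MainThm:espo:id}.
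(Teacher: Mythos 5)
Your plan has a genuine structural flaw: the reduction you propose in your second step --- find a single pair of ``seeds'' $x,y\in L$ depending only on $\Oc_{\ell}$ so that $\cl$ is of type D \emph{for every $t\ge 2$ simultaneously} --- cannot work, because the exceptional set in Table \ref{tab:MainThm:espo:id} genuinely depends on $t$. For example, the classes \textup{2A} of $T$, $J_2$, $Fi_{22}$, $Fi_{23}$, $Co_2$ are exceptions only for $t$ odd, elements of order $4$ only for $t=2$, and the identity only for $t$ odd with $(t,|L|)=1$; a $t$-uniform criterion would either miss these classes for all $t$ or catch them for all $t$, and in either case could not reproduce the table. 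The paper's actual proof is forced to split into cases by $t$: for $t\ge 6$ even it uses Corollary \ref{cor:twisted-D-todo-t}; for $t$ odd and $\ell$ an involution it uses the diagonal embedding $\oc^{L}_{\ell}\hookrightarrow\cl$ of Subsection \ref{subsect:diagonal} together with the prior classification \cite[Thm.\ II]{AFGV-espo} of which involution classes in sporadic groups are of type D (an input your proposal never invokes, and without which the exceptional \textup{2A} rows cannot be explained); for $t=2,4$ it uses Lemma \ref{lema:twisted-D-todo-t}, whose hypotheses \eqref{eq:cond-dos}--\eqref{eq:cond-tres} on $\ord(x)$ versus $t$ are exactly where the $t$-dependence enters; and for $\ord(\ell)>2$ it uses quasi-reality of the class (Lemma \ref{lema:twisted-D-todo-t-2}), again with a $t=2$ caveat when $\ord(\ell)=4$.

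A second, practical gap: you propose to search for witnesses $r,s$ inside $\cl$ itself and to fall back on structure constants for $B$ and $M$. But $\cl\subseteq L^t$ has cardinality $|L|^{t-1}|\Oc_{\ell}|$, so a direct search is infeasible even for small sporadic groups once $t\ge 2$, and is in any case not $t$-uniform. The paper avoids this entirely: all the group-theoretic verification happens inside $L$ (existence of an element of order $6$, of an element of even order $>4$ in $\Cent_L(\ell)$, quasi-reality of $\Oc_{\ell}$), and these facts are read off from the ATLAS and the references \cite{BR,Bo,Br,GAP,Iv,W,AtlasRep1.4} uniformly for all sporadic groups including $B$ and $M$, with no class-multiplication-constant arguments needed. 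Your reformulation of the type-D inequality as $(rs)^2\neq(sr)^2$ in $L^t\rtimes\langle u\rangle$ is correct, but the missing content is precisely the lemmas that manufacture decomposable subracks of $\cl$ from data in $L$ (affine subracks from centralizer elements, the subracks $X,Y$ of powers of $\ell$ in the quasi-real case, and the diagonal copy of $\oc^{L}_{\ell}$), and those are what make the problem finite.
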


The case: $L$ a sporadic group, $\theta\neq \id$, will be treated in another paper.
See however a preliminary discussion in Subsection \ref{subsect:spor-no-id}.

\begin{table}[th]
\begin{center}
\caption{THR $\cl$ of type $(\A_n,t,\theta)$, $\theta=\id$,
$t\geq2$, $n\geq 5$, not known of type D. Those not of type D are
in bold.}\label{tab:MainThm:An:id}
\begin{tabular}{|c|c|c|c|} 
\hline  $n$ & $\ell$ & Type of $\ell$ & $t$ \\

\hline   any & $e$ &$(1^n)$ & odd, $(t,n!)=1$\\
   5 & & ${\bf (1^5)}$ & {\bf 2}\\
   5 & & $(1^5)$ & 4\\

   6 & & ${\bf (1^6)}$ & {\bf 2}\\

\hline
5 & involution & $(1,2^{2})$ & 4, odd\\

6 & & $(1^{2},2^{2})$ & odd\\

8 & &$(2^{4})$ & odd\\

\hline
any & order 4& $(1^{r_1},2^{r_2},4^{r_4})$, $r_4>0$, $r_2+r_4$ even & 2\\

\hline
\end{tabular}
\end{center}
\end{table}

\begin{table}[th]
\begin{center}
\caption{THR $\cl$ of type $(\A_n,t,\theta)$,
$\theta=\iota_{(1\,2)}$, $t\geq 2$, $n\geq 5$, not known of type
D.}\label{tab:MainThm:An:noid}
\begin{tabular}{|c|c|c|} 
\hline  $n$ & Type of $\ell(1\;2)$ & $t$ \\

\hline any & $(1^{s_1},2^{s_2},\dots,n^{s_n})$, $s_1\leq 1$ and $s_2=0$ &  any\\

 & $s_h\geq 1$, for some $h$, $3\leq h\leq n$ & \\

&&\\

 & $(1^{s_1},2^{s_2},4^{s_4})$, $s_1\leq 2$ or $s_2\geq 1$, &  2\\

& $s_2+s_4$ odd, $s_4\geq 1$&\\

\hline
5 &$(1^3,2)$ &  2, 4\\

\hline 6 & $(1^4,2)$ &  2 \\
& $(2^3)$  & 2 \\

\hline 7 & $(1,2^3)$ &  2, odd \\

\hline 8 & $(1^2,2^3)$ &   odd\\

\hline 10 & $(2^5)$ &  odd\\

\hline
\end{tabular}
\end{center}
\end{table}

\begin{table}[ht]
\begin{center}
\caption{THR $\cl$ of type $(L,t,\theta)$, with $L$ a sporadic
group, $\theta=\id$, not known of type
D.}\label{tab:MainThm:espo:id}
\begin{tabular}{|c|c|c|}

\hline  sporadic group & Type of $\ell$ or  & $t$ \\

 &  class name of $\oc_\ell^L$ &  \\

\hline  any &  \textup{1A} &  $(t, |L|)= 1$,  $t$ odd \\

\cline{2-3} &  $\ord(\ell)=4$  & 2\\

\hline $T$, $J_2$, $Fi_{22}$, $Fi_{23}$, $Co_2$  & \textup{2A}  & odd  \\

\hline $B$ & \textup{2A, 2C}  & odd\\

\hline $Suz$ & \textup{6B, 6C}  & any \\

\hline
\end{tabular}
\end{center}
\end{table}

\section{Preliminaries}\label{section:preliminaries}

\subsection{Glossary}\label{subsection:glossary}
If $t, n \in \N$, then $(t,n)$ denotes their highest common
divisor. If $G$ is a group and $x\in G$, then $\langle x\rangle$
denotes the cyclic subgroup generated by $x$; $\iota_x$, the inner
automorphism associated to $x$; $\oc_x^G$, the conjugacy class of
$x$, and $\Cent_G(x)$ its centralizer. We say that $x$ is an
involution if it has order 2. If $u\in \Aut(G)$, then $G^u$
denotes the subgroup of points fixed by $u$. The trivial element
of $G$ is denoted by $e$. A decomposition of a rack $X$ is a
presentation as a disjoint union of two subracks $X = Y \coprod
Z$. A rack is indecomposable if it admits no decomposition.

\subsection{Affine racks of type D}\label{subsection:afines}

Let $ A $ be a finite abelian group and $g\in \Aut (A)$. We denote
by $(A,g)$ the rack with underlying set $A$ and rack
multiplication $x\trid y := g(y) + (\id-g)(x)$, $x, y\in A$; this
is a subrack of the group $A \rtimes \langle g\rangle$. Any rack
isomorphic to some $(A,g)$ is called \emph{affine}.

Notice that $(A, g)$ is indecomposable if and only if $\id-g$ is
invertible. For, assume that $\Imm (\id - g)\neq A$; since $x\trid
y = g(y) + (\id-g)(x) = y + (\id-g) (x-y)$, the decomposition of
$A$ in cosets with respect to $\Imm (\id - g)$ is a decomposition
in the sense of racks. In fact, if $Y$ is any coset, then $A\trid
Y = Y$; thus, the union of any two different cosets is a
decomposable subrack of $A$. The converse of the claim is
\cite[Remark 3.13]{AFGV}.

\medbreak For instance, consider the cyclic group $A = \Z/n$ and
the automorphism $g$ given by the inversion; the rack $(A,g)$ is
denoted $\D_{n}$ and called a dihedral rack. Thus, a family
$(\mu_{i})_{i\in \Z/n}$ of distinct elements of a rack $X$ is
isomorphic to $\D_{n}$ if $\mu_{i} \trid \mu_{j} = \mu_{2i-j}$ for
all $i,j$.

\begin{lema}\label{lema:d2m}
If $m>2$, then the dihedral rack $\D_{2m}$ is of type D.
\end{lema}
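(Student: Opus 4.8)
The plan is to realize $\D_{2m}$ concretely and to read off both a decomposition and the defining inequality \eqref{eqn:hypothesis-subrack} from a single well-chosen pair. Writing $\D_{2m}=(\Z/2m,g)$ with $g$ the inversion, the rack operation is $\mu_i\trid\mu_j=\mu_{2i-j}$. Here $\id-g$ is multiplication by $2$, so $\Imm(\id-g)=2\,\Z/2m$ is the subgroup of even residues, a proper subgroup of index $2$; by the indecomposability criterion recalled above, $\D_{2m}$ is decomposable. Concretely, I would set $R=\{\mu_i: i \text{ even}\}$ and $S=\{\mu_i: i \text{ odd}\}$. A parity check shows that $2i-j$ has the parity of $j$ when $i$ is even and the opposite parity when $i$ is odd, so $R$ and $S$ are subracks and each $\varphi_{\mu_i}$ preserves both of them; thus $Y:=\D_{2m}=R\coprod S$ is a decomposable subrack of itself.

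It then remains to verify \eqref{eqn:hypothesis-subrack} for some $r\in R$, $s\in S$. I would take the cleanest pair $r=\mu_0$ and $s=\mu_1$ and compute the iterated product step by step:
\[
r\trid s=\mu_{-1},\qquad s\trid(r\trid s)=\mu_1\trid\mu_{-1}=\mu_{3},\qquad r\trid\bigl(s\trid(r\trid s)\bigr)=\mu_0\trid\mu_{3}=\mu_{-3}.
\]
Now $\mu_{-3}=\mu_1$ in $\D_{2m}$ if and only if $-3\equiv 1\pmod{2m}$, i.e. $2m\mid 4$, i.e. $m\le 2$. Hence for $m>2$ one gets $r\trid\bigl(s\trid(r\trid s)\bigr)=\mu_{-3}\neq\mu_1=s$, which is exactly \eqref{eqn:hypothesis-subrack}. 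Therefore $\D_{2m}$ is of type D, as claimed.

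I do not expect any serious obstacle here: once the even/odd splitting is recognized as a genuine rack decomposition, the argument reduces to a one-line congruence. The two points deserving attention are that $R$ and $S$ are honestly subracks whose disjoint union is all of $\D_{2m}$ (immediate from the parity of $2i-j$), and that the hypothesis $m>2$ enters in precisely one place, namely to ensure $2m\nmid 4$ so that $\mu_{-3}\neq\mu_1$. For completeness one can note that a general pair $r=\mu_a$, $s=\mu_b$ yields $r\trid\bigl(s\trid(r\trid s)\bigr)=\mu_{4a-3b}$, so \eqref{eqn:hypothesis-subrack} becomes $4(a-b)\not\equiv 0\pmod{2m}$, equivalently $2(a-b)\not\equiv 0\pmod m$; the choice $a=0$, $b=1$ already isolates the sharp bound $m>2$.
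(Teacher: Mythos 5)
Your proof is correct and follows essentially the same route as the paper: the same even/odd decomposition of $\D_{2m}$ coming from $\Imm(\id-g)=2\,\Z/2m$, and the same congruence check, with your pair $(\mu_0,\mu_1)$ being just a translate of the paper's $(1,2)$ (both give $4(a-b)\equiv 0 \pmod{2m}$ iff $m\le 2$). No issues.
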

\begin{proof}
Since $\id -g = 2$ is not invertible,  $X=\D_{2m}=\{1,2,3,...,2m\}$ is decomposable.
Indeed, if $Y=\{1,3,5,...,2m-1\}$, then $X\triangleright Y\subseteq Y$ and therefore $X=Y\coprod (X\backslash Y)$
is a decomposition. Let $r=1\in Y$ and $s=2\in X\backslash Y$. Then
$r\triangleright(s\triangleright(r\triangleright s))\ne s$,
since $r\triangleright(s\triangleright(r\triangleright s))=-2+2m$
and $m>2$.
\end{proof}

\medbreak We now consider a generalization of this example.
Let $k,t \in \N$, $k>1$, $t >2$
 and consider the affine rack
$ (A,g) $ where $A =(\Z/k)^{t-1}$ and
\begin{equation}\label{eq:subrack-afin}
g(a_{1},\ldots ,a_{t-1})= \left(-\sum_{i=1}^{t-1} a_{i},
a_{1},\ldots, a_{t-2}\right).
\end{equation}

Note that the case $t=2$ corresponds to the dihedral rack.

\begin{lema}\label{lema:A-g-tipo-D}
If $(t,k)\ \neq 1$, and $k > 2$ when $t=4$, then $(A,g)$ is of type D.
\end{lema}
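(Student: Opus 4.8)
The plan is to exhibit $(A,g)$ as type D through exactly the decomposition into cosets of $\Imm(\id-g)$ that was used for the dihedral rack in Lemma \ref{lema:d2m}. First I would check that $\id-g$ fails to be invertible precisely when $(t,k)\neq 1$: a fixed point $g(c_1,\dots,c_{t-1})=(c_1,\dots,c_{t-1})$ forces $c_1=\dots=c_{t-1}=:c$ together with $tc=0$ in $\Z/k$, and the latter has a nonzero solution if and only if $(t,k)\neq 1$. Hence $H:=\Imm(\id-g)$ is a proper subgroup, and by the discussion in Subsection \ref{subsection:afines} the union of any two distinct cosets of $H$ is a decomposable subrack $R\coprod S$. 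I may then pick $r\in R$, $s\in S$ freely, and as $R\neq S$ and the representatives vary the difference $s-r$ ranges over all of $A\smallsetminus H$.

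Next I would reduce the type D inequality to a linear condition. Writing the operation as $x\trid y=g(y)+(\id-g)(x)$ and expanding, one obtains $r\trid\bigl(s\trid(r\trid s)\bigr)=(g-g^2+g^3)(s)+(\id-g+g^2-g^3)(r)=r+P(s-r)$, where $P=g-g^2+g^3$. Consequently $r\trid(s\trid(r\trid s))=s$ if and only if $Q(s-r)=0$, with $Q:=\id-g+g^2-g^3=(\id-g)(\id+g^2)$. Combining this with the previous paragraph, $(A,g)$ is of type D precisely when there exists $w\in A\smallsetminus H$ with $Q(w)\neq 0$, i.e. when $H\cup\ker Q\neq A$. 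Since a group is never the union of two proper subgroups and $H\neq A$ already holds, the whole statement comes down to proving that $Q$ is not the zero endomorphism.

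The heart of the argument, and the step I expect to be most delicate, is therefore to show $Q\not\equiv 0$ in exactly the non-excluded cases. Here I would use the relation $\id+g+\cdots+g^{t-1}=0$, valid because $g$ is the cyclic shift on $(\Z/k)^t$ restricted to the hyperplane $\sum b_i=0$. For $t\geq 5$ the shift model gives $Q(f_1-f_2)=f_1-2f_2+2f_3-2f_4+f_5$, which is nonzero on distinct basis vectors, so $Q\neq 0$. For $t=3$ the relation yields $g^3=\id$ and $Q=-\id-2g$, nonzero since $\{\id,g\}$ are independent and the $\id$-coefficient is a unit mod $k$. For $t=4$ the relation gives $Q=2(\id+g^2)$, and a direct computation shows $(\id+g^2)(a)=(a_1+a_3)(1,-1,1)$; thus $Q\neq 0$ exactly when $2\not\equiv 0\ \mathrm{mod}\ k$, that is when $k>2$. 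This matches the hypothesis ``$k>2$ when $t=4$'' and isolates $t=4,\ k=2$ as the unique situation in which $Q\equiv 0$ and the method genuinely fails. With $Q\not\equiv 0$ secured in all remaining cases, the type D conclusion follows.
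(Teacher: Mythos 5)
Your proof is correct and follows essentially the same route as the paper: both arguments use the decomposition of $A$ into cosets of $\Imm(\id-g)$ (proper exactly because $(t,k)\neq 1$) and then test the condition $r\trid(s\trid(r\trid s))\neq s$ across two distinct cosets. The only real difference is in how the witness pair is produced --- the paper simply takes $r=(1,0,\dots,0)$, $s=0$ and computes the triple product explicitly in the cases $t\geq 5$, $t=4$, $t=3$, whereas you obtain a witness abstractly by checking that $Q=(\id-g)(\id+g^{2})\not\equiv 0$ and invoking the fact that a group is never the union of two proper subgroups; your packaging has the small bonus of cleanly isolating $t=4$, $k=2$ as the unique failure of the method, in agreement with the paper's footnote.
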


\pf
First we show that $(A,g)$ contains at least two
cosets with respect to $\Imm (\id -g)$. Indeed, let $(a,0,\ldots,0) \in A$
such that $a$ does not belong to
the image of $m_{t}:\Z/n\to \Z/n,\ x\mapsto tx$; such
an $a$ exists since $(t,k)\neq 1$. Then $(a,0,\ldots,0) \notin
\Imm (\id -g)$, since otherwise there exists
$(c_{1},\ldots,c_{t-1})$ such that
\begin{align*}
(a,0,\ldots,0) & =
(\id-g)(c_{1},\ldots,c_{t-1}) = (c_{1} + \sum_{i=1}^{t-1} c_{i}, c _{2} - c_{1},
\ldots, c_{t-1} -c_{t-2})
 \end{align*}
which implies that $c_{i} = c_{j}$ and $a = t c_{1}$ in
$\Z/k$, a contradiction.
Now, take $s=(0,\ldots, 0)$ and
$r = (1,0,\ldots, 0)$; then
$$r\trid(s\trid(r\trid s)) = \begin{cases}(2,-2,2,-1,0,\ldots,0), & t\geq 5; \\
(2,-2,2), & t = 4; \\
(1,-2), & t =3. \\
\end{cases}$$
This is different to $s$, by hypothesis \footnote{ If $k = 2$ and
$t=4$,  then $(A,g)$ is not of type D.}. Thus $(A,g)$ is of type
D. \epf

\section{General techniques}\label{section:general-techniques}

\subsection{Twisted conjugacy classes}\label{subsect:tcc}

Let $G$ be a finite group and $u\in \Aut (G)$.
$G$ acts  on itself by
$y\rightharpoonup_u x = y\,x\,u(y^{-1})$, $x,y\in G$. Let $\oc^{G,u}_x$
be the orbit of $x\in G$ by this action; we call it the \emph{twisted conjugacy class} of $x$.
Then $\oc^{G,u}_x$ is a rack with operation
\begin{equation}\label{eqn:twisted-homogeneous}
y\trid_u z = y\,u(z\,y^{-1}), \quad y,z\in \oc^{G,u}_x.
\end{equation}
The verification that this is  a rack is
straightforward, see \cite[Remark 3.8]{AG-adv}. For instance, if
$y = h\rightharpoonup_u x$ and $z = v\rightharpoonup_u x$, then
$y\trid_u z = w\rightharpoonup_u x$, where $ w =
hxu(h^{-1}v)x^{-1}$.
Of course, if $u = \id$, then this is just the rack structure on a conjugacy class.

The set of twisted conjugacy classes for $u$ only depends on its class in $\Out (G)$.
For, if $n\in G$ and $u' = u \iota_n$, where $\iota_n$ is the inner automorphism associated to $n$,
then $R: (G,u) \to (G, u')$, $R(x) = x\,u(n^{-1})$, is an isomorphism of racks. Indeed, if $x,y\in G$, then
\begin{align*}
R(x \trid y) = xu(yx^{-1}n^{-1}) = xu(n^{-1}) u(nyx^{-1}n^{-1}) = R(x) \trid' R(y).
\end{align*}

\begin{lema}\label{lema:esp-tipo-D}
Let $x\in G$ with $\ord(x)= 2m>4$ and $u(x)=x^{-1}$. If
$\langle x\rangle\subseteq \oc^{G,u}_x$, then $\oc^{G,u}_x$ is of type D.
\end{lema}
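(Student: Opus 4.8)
The plan is to reduce the statement to the dihedral case already settled in Lemma \ref{lema:d2m}, by exhibiting the cyclic subgroup $\langle x\rangle$ as a subrack of $\oc^{G,u}_x$ isomorphic to a dihedral rack. By hypothesis $\langle x\rangle \subseteq \oc^{G,u}_x$ as sets, so the only things to check are that $\langle x\rangle$ is closed under the twisted operation \eqref{eqn:twisted-homogeneous} and to identify the resulting rack structure.

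The key computation is the following. For $i, j \in \Z/2m$, using $u(x)=x^{-1}$ and that $u$ is a homomorphism,
\begin{align*}
x^i \trid_u x^j = x^i\, u(x^j x^{-i}) = x^i\, u(x)^{j-i} = x^i\, x^{-(j-i)} = x^{2i-j}.
\end{align*}
In particular $x^i \trid_u x^j \in \langle x\rangle$, so $\langle x\rangle$ is indeed a subrack of $\oc^{G,u}_x$; and since the $2m$ elements $x^0,\dots,x^{2m-1}$ are pairwise distinct (as $\ord(x)=2m$), the bijection $x^i \mapsto \mu_i$ identifies $\langle x\rangle$ with the dihedral rack $\D_{2m}$, because $\mu_i \trid \mu_j = \mu_{2i-j}$ is exactly its defining relation.

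Finally, since $\ord(x)=2m>4$ gives $m>2$, Lemma \ref{lema:d2m} shows that $\D_{2m}$, hence $\langle x\rangle$, is of type D: it contains a decomposable subrack $R\coprod S$ with a pair $r,s$ satisfying \eqref{eqn:hypothesis-subrack}. That same decomposable subrack is a subrack of $\oc^{G,u}_x$, so $\oc^{G,u}_x$ is of type D as well. I expect the only real content to be the displayed computation identifying the restricted operation with the dihedral multiplication; once that is in place, everything is inherited from Lemma \ref{lema:d2m} together with the fact that being of type D passes from a subrack up to the ambient rack.
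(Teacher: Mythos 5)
Your proof is correct and follows essentially the same route as the paper: both compute $x^i \trid_u x^j = x^i u(x^{j-i}) = x^{2i-j}$ to identify $\langle x\rangle$ as a subrack of $\oc^{G,u}_x$ isomorphic to $\D_{2m}$, and then invoke Lemma \ref{lema:d2m} since $2m>4$ gives $m>2$. Your write-up merely spells out the details (closure, distinctness of the $x^i$, inheritance of type D by the ambient rack) that the paper leaves implicit.
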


The hypothesis ``$\langle x\rangle\subseteq \oc^{G,u}_x$'' is
equivalent to the existence of $y\in G$ such that
$y\rightharpoonup_ux =x^{2}$.

\begin{proof}
Since $x^i\trid_u x^j=x^i u(x^{j-i})=x^{2i-j}$, $\langle x\rangle$ is a subrack of $G$ isomorphic to $\D_{2m}$;
hence Lemma \ref{lema:d2m} applies.
\end{proof}

\bigbreak

The following consequence of \eqref{eqn:twisted-homogeneous} helps
the search of twisted homogeneous racks of type D.

\begin{obs}\label{obs:corta-diagonal}
If $x\in G^u$, then $\oc^{G^u}_x$ is a subrack of $\oc^{G,u}_x$.
\end{obs}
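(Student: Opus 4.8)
The plan is to establish the inclusion $\oc^{G^u}_x\subseteq\oc^{G,u}_x$ at the level of sets and then to verify that $\oc^{G^u}_x$ is closed under the twisted operation $\trid_u$ of \eqref{eqn:twisted-homogeneous}. The guiding observation is that the fixed-point subgroup $G^u$ is precisely where both the twisted action $\rightharpoonup_u$ and the twisted product $\trid_u$ degenerate to their untwisted counterparts, simply because $u(y)=y$ whenever $y\in G^u$.

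First I would treat the inclusion. A typical element of $\oc^{G^u}_x$ is $yxy^{-1}$ with $y\in G^u$. Since $u(y^{-1})=u(y)^{-1}=y^{-1}$, one has $y\rightharpoonup_u x=y\,x\,u(y^{-1})=yxy^{-1}$, so $yxy^{-1}$ lies in the orbit $\oc^{G,u}_x$; this gives $\oc^{G^u}_x\subseteq\oc^{G,u}_x$. I would then record that $\oc^{G^u}_x$ is in fact contained in $G^u$: if $a=yxy^{-1}$ with $x,y\in G^u$, then $u(a)=u(y)u(x)u(y)^{-1}=yxy^{-1}=a$, as one expects since $G^u$ is a subgroup containing $x$.

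Finally I would check closure under $\trid_u$. For $a,b\in G^u$ the defining formula simplifies to $a\trid_u b=a\,u(b\,a^{-1})=a\,u(b)\,u(a^{-1})=a\,b\,a^{-1}$. Hence on the subset $\oc^{G^u}_x$ of $G^u$ the operation $\trid_u$ is nothing but conjugation inside $G^u$; since a conjugacy class is stable under conjugation by its own elements, $a\trid_u b\in\oc^{G^u}_x$. This proves that $\oc^{G^u}_x$ is a subrack of $\oc^{G,u}_x$, whose induced rack structure coincides with that of the ordinary conjugacy class of $x$ in $G^u$.

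I expect no genuine obstacle: the assertion is a direct consequence of the fact that $\trid_u$ collapses to conjugation on $G^u$. The only point requiring a moment's care is the intermediate verification that the class $\oc^{G^u}_x$ remains inside $G^u$, so that the simplified expression for $\trid_u$ applies to all of its elements.
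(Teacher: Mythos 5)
Your argument is correct and is exactly the reasoning the paper leaves implicit when it states this as a direct consequence of \eqref{eqn:twisted-homogeneous}: since $u$ fixes $G^u$ pointwise, the twisted operation $\trid_u$ collapses to ordinary conjugation there, and the class $\oc^{G^u}_x$ is contained in $G^u$ and stable under that conjugation. Your extra care in checking that $\oc^{G^u}_x\subseteq G^u$ before simplifying $\trid_u$ is the right (and only) point needing verification, so nothing is missing.
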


Note that, if $u$ is an involution, then
\begin{equation}\label{eqn:s-noid-condition}
\oc^{G,u}_x\cap G^u \neq\emptyset \iff \exists z\in G: zxz =
u(x) \text{ and }\exists y\in G: z = u(y^{-1})y.
\end{equation}

\bigbreak

\subsection{Twisted homogeneous racks}\label{subect:thr}

We fix a finite group  $L$, $t\in \N$, $t > 1$, and $\theta\in
\Aut (L)$. We consider in the rest of this section
\begin{itemize}
  \item $G = L^t$,

  \item $u\in \Aut (G)$, $u(\ell_1,\ldots,\ell_t)=
(\theta(\ell_t),\ell_1,\ldots,\ell_{t-1})$, $\ell_1,\ldots,\ell_t \in L$.
\end{itemize}

\bigbreak

The twisted conjugacy class of $(x_{1},\ldots, x_{t}) \in L^{t}$ will be called a
\emph{twisted homogeneous rack} of class $(L,t, \theta)$ and denoted  $\C_{(x_{1},\ldots, x_{t})}$, to
avoid confusions. It is also useful to denote $ \C_{\ell} := \C_{(e,\ldots , e,\ell)}$,
$\ell\in L$. We first describe the twisted homogeneous racks of class $(L,t, \theta)$.

\begin{prop}\label{prop:twisted-ger-par}
\renewcommand{\theenumi}{\roman{enumi}}\renewcommand{\labelenumi}{(\theenumi)}
\begin{enumerate}

\item If  $(x_{1},\ldots, x_{t}) \in L^{t}$ and $\ell = x_{t}x_{t-1}\cdots x_{2}x_{1}$,
then $\C_{(x_{1},\ldots ,x_{t})} = \C_{\ell}$.

\item  $ \C_{\ell}= \C_k$ iff $k\in \oc^{L, \theta}_{\ell}$; hence $$\C_{\ell}=\{(x_{1},\ldots, x_{t})\in L^{t}:\
x_{t}x_{t-1}\cdots x_{2}x_{1}\in \oc^{L, \theta}_{\ell}\}.$$

\item  There exists a bijection $\varphi$
between the set of twisted conjugacy classes
of $L$ and the set of twisted homogeneous racks of class $(L,t, \theta)$, given by
$\varphi(\oc^{L, \theta}_{\ell}) = \C_{\ell}$.

\item  $\vert \C_{\ell}\vert = |L|^{t-1} |\oc^{L, \theta}_{\ell}|$.
\end{enumerate}
\end{prop}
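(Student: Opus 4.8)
The plan is to establish the four statements in sequence, since each builds on the previous one. The key computational engine is the formula for the twisted action, $y \rightharpoonup_u x = y\, x\, u(y^{-1})$, specialized to $G = L^t$ with the cyclic-shift-plus-$\theta$ automorphism $u$.

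First I would prove (i) by exhibiting an explicit element of $G = L^t$ that moves $(x_1,\ldots,x_t)$ to $(e,\ldots,e,\ell)$ under $\rightharpoonup_u$. The natural guess is to look for $y = (y_1,\ldots,y_t)$ with $y \rightharpoonup_u (x_1,\ldots,x_t) = (e,\ldots,e,\ell)$. Writing out $y\,(x_1,\ldots,x_t)\,u(y^{-1})$ coordinatewise and using $u(y^{-1}) = (\theta(y_t^{-1}), y_1^{-1},\ldots,y_{t-1}^{-1})$, one gets a triangular system of equations in the $y_i$ that can be solved recursively, forcing the first $t-1$ coordinates to be $e$ and the last to be $x_t x_{t-1}\cdots x_1 = \ell$. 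I expect this bookkeeping to be the main obstacle: one must choose the solving order carefully so the recursion closes, and track where $\theta$ enters the final coordinate.

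Next, (ii) and (iii) should follow quickly from (i). For (ii), by (i) it suffices to compare $\C_{\ell} = \C_{(e,\ldots,e,\ell)}$ with $\C_k = \C_{(e,\ldots,e,k)}$; these twisted conjugacy classes in $L^t$ coincide iff the representatives lie in the same orbit, and unraveling the action on elements of the form $(e,\ldots,e,\ast)$ reduces precisely to the condition $k \in \oc^{L,\theta}_{\ell}$, i.e. $k = m\,\ell\,\theta(m^{-1})$ for some $m\in L$. The displayed description of $\C_{\ell}$ as the set of tuples whose ordered product lies in $\oc^{L,\theta}_{\ell}$ is then immediate from combining (i) and (ii). Statement (iii) is a restatement: the map $\varphi(\oc^{L,\theta}_{\ell}) = \C_{\ell}$ is well defined and injective exactly by the ``iff'' in (ii), and surjective because every THR is some $\C_{(x_1,\ldots,x_t)} = \C_{\ell}$ by (i).

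Finally, for (iv) I would use the orbit description from (ii): $\C_{\ell} = \{(x_1,\ldots,x_t) : x_t\cdots x_1 \in \oc^{L,\theta}_{\ell}\}$. The ordered-product map $L^t \to L$, $(x_1,\ldots,x_t)\mapsto x_t\cdots x_1$, is surjective with all fibers of size $|L|^{t-1}$ (fix the product, and the first $t-1$ coordinates range freely while the last is determined). Hence the preimage of $\oc^{L,\theta}_{\ell}$ has cardinality $|L|^{t-1}\,|\oc^{L,\theta}_{\ell}|$, giving the claim. The only care needed here is to confirm the fiber-size count, which is a routine consequence of the product map being a bijection in its final argument once the others are fixed.
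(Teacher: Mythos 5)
Your proposal is correct and matches the paper's proof in all essentials: for (i) the paper simply writes down the solution of your triangular system explicitly, namely $y=(u_1,\dots,u_{t-1},e)$ with $u_j=(x_j\cdots x_1)^{-1}$ (note that choosing the last coordinate $y_t=e$ makes $\theta$ drop out entirely), (ii) and (iii) are handled exactly as you describe, and for (iv) the paper's explicit bijection $\psi\colon L^{t-1}\times\oc^{L,\theta}_{\ell}\to\C_{\ell}$ is just your fiber-count for the ordered-product map written out. No substantive difference in approach.
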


\pf
(i). Let $u_j = (x_j \dots x_1)^{-1}$; then
\begin{align*}
(u_{1}, u_{2},\ldots, u_{t-1},e)&\rightharpoonup (x_{1},\ldots, x_{t}) \\&=
(u_{1}x_1, u_{2}x_2u_{1}^{-1},\ldots, u_{t-1}x_{t-1}u_{t-2}^{-1},x_{t}u_{t-1}^{-1})\\
&=(e,\ldots,e, x_{t}x_{t-1}\cdots x_{2}x_{1}).
\end{align*}

(ii). Suppose that there exists $(a_{1},\ldots ,a_{t}) \in L^{t}$ such that
$(a_{1},\ldots, a_{t})\rightharpoonup (e,\ldots, e, \ell) =
(e,\ldots, e, k)$. Then
$
a_{t-1}= a_{t-2}= \ldots = a_{2}= a_{1}=  \theta(a_{t}),
$
hence $k = a_t \ell \theta(a_{t}^{-1})$. Conversely, assume that $k = b \ell \theta(b^{-1})$; if
$a_t = b$, $a_{t-1}= a_{t-2}= \ldots = a_{2}= a_{1}=  \theta(b)$, then
$(a_{1},\ldots, a_{t})\rightharpoonup (e,\ldots, e, \ell) =
(e,\ldots, e, k)$. The second claim follows at once from the first and (i).
Now (iii) is immediate.

(iv). Define the map
 $\psi: L^{t-1}\times \oc^{L, \theta}_{\ell} \to \C_{\ell}$
by $$\psi(b_{1},\ldots,b_{t-1},c) = (b_{1},\ldots, b_{t-1},
 \theta^{-1}(b_{t-1}\cdots b_{1})^{-1}c).$$
 It is a  well-defined map by (ii)  and it is clearly injective. Moreover,
 by the proof of (ii) we know that if
 $(b_{1},\ldots, b_{t}) \in \C_{\ell}$, then
 $b_{t} =  \theta^{-1}(b_{t-1}\cdots b_{1})^{-1}c$ for some
 $c\in \oc^{L, \theta}_{\ell}$. Thus
 $(b_{1},\ldots, b_{t}) = \psi(b_{1},\ldots, b_{t-1},c)$ by definition
 and $ \psi $ is surjective, implying that
$\vert \C_{\ell}\vert = |L|^{t-1} |\oc^{L, \theta}_{\ell}|$.
\epf

\subsection{Twisted homogeneous racks intersecting the diagonal}\label{subsect:diagonal}

Clearly,
\begin{align}\label{eqn:puntos-fijos}
G^u  &= \{(a,\ldots,a): a \in L^\theta \},
\\\intertext{hence, by Proposition \ref{prop:twisted-ger-par}}
\cl \cap G^u &\neq \emptyset \iff \exists\, a \in L^\theta: a^t = \ell.
\end{align}
If this happens, then we have an inclusion of racks
$\oc^{L^\theta}_a\hookrightarrow\cl$, see Remark \ref{obs:corta-diagonal}.
In the particular case $\theta = \id$, if there exists $a \in L$ such that
$a^t = \ell$, we have an inclusion of racks $\oc^{L}_a\hookrightarrow\cl$.

\subsection{Affine subracks of twisted homogeneous racks}
\label{subsection:affine-subracks}

\begin{lema}\label{lema:twisted-D-todo-t}
Let $ \ell \in L $. Assume that there exists $x\in L$ such that
\begin{align}
  \label{eq:cond-uno} \theta(\ell x \ell^{-1}) &= x,
\\\label{eq:cond-dos} (k,t) &\neq 1, \qquad \text{where } k= \ord(x),
\\\label{eq:cond-cuatro} k &\neq 2,4, \qquad \text{when }  t=2,
\\\label{eq:cond-tres} k &> 2, \qquad \text{when }  t=4.
\end{align}
Then $\C_{\ell}$ is of type D.
\end{lema}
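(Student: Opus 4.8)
The plan is to exhibit inside $\C_\ell$ an affine subrack of the form $(A,g)$ from Lemma~\ref{lema:A-g-tipo-D}, and then invoke that lemma. The natural candidate for $A$ is a cyclic group generated by $x$: since \eqref{eq:cond-uno} says $\theta(\ell x \ell^{-1}) = x$, the element $x$ interacts nicely with the twisting by $\theta$ and with $\ell$. First I would locate a concrete copy of the rack $(A,g)$ with $A = (\Z/k)^{t-1}$ and $g$ as in \eqref{eq:subrack-afin} sitting inside $\C_\ell = \{(y_1,\ldots,y_t) : y_t\cdots y_1 \in \oc^{L,\theta}_\ell\}$, where $k = \ord(x)$.

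The key computation is to write down an explicit embedding $\Phi\colon (\Z/k)^{t-1} \to \C_\ell$ and check it is a rack homomorphism. Guided by Proposition~\ref{prop:twisted-ger-par}(ii), the elements of $\C_\ell$ are parameterized by their first $t-1$ coordinates together with a constraint on the product; a promising form is
\begin{equation*}
\Phi(a_1,\ldots,a_{t-1}) = (x^{a_1}, x^{a_2}, \ldots, x^{a_{t-1}}, x^{-(a_1+\cdots+a_{t-1})}\ell),
\end{equation*}
so that the reversed product lands in $\oc^{L,\theta}_\ell$ (indeed the product equals $\ell$ up to a power of $x$ that is neutralized by \eqref{eq:cond-uno}). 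Using the rack operation $y \trid_u z = y\,u(z y^{-1})$ together with the shift form of $u$ and the relation \eqref{eq:cond-uno}, I would verify that $\Phi$ intertwines the affine multiplication $a \trid b = g(b) + (\id - g)(a)$ on $(\Z/k)^{t-1}$ with $\trid_u$ on $\C_\ell$. The point of \eqref{eq:cond-uno} is exactly to absorb the $\theta$ that appears when the last coordinate cycles to the front under $u$, so that powers of $x$ recombine into the affine formula \eqref{eq:subrack-afin}; the commutation of the powers of $x$ with each other handles the rest.

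Once $\Phi$ is shown to be an injective rack morphism with image a subrack of $\C_\ell$, the hypotheses translate directly: condition \eqref{eq:cond-dos} is $(t,k) \neq 1$, and conditions \eqref{eq:cond-cuatro}, \eqref{eq:cond-tres} cover the excluded small cases ($k = 2,4$ at $t=2$, i.e.\ the dihedral degenerate cases from Lemma~\ref{lema:d2m}, and $k=2$ at $t=4$, the footnoted exception in Lemma~\ref{lema:A-g-tipo-D}). Thus the image is an affine subrack satisfying the hypotheses of Lemma~\ref{lema:A-g-tipo-D}, hence of type D; and since being of type D is inherited by any rack containing a subrack of type D, $\C_\ell$ is of type D.

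The main obstacle I anticipate is the bookkeeping in the rack-homomorphism verification: correctly tracking how the cyclic shift in $u$ permutes the coordinate powers of $x$ and how \eqref{eq:cond-uno} is used to replace $\theta(x^m)$ by $x^m$ at the wrap-around step, while simultaneously confirming that the last coordinate stays consistent with the product constraint defining $\C_\ell$. The $t=2$ case must be checked separately since it reduces to the dihedral rack $\D_k$ (where one needs $k>4$ even, matching \eqref{eq:cond-cuatro} and Lemma~\ref{lema:d2m}), and I would treat it as a base case rather than folding it into the general $\Phi$.
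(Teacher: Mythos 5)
Your plan is essentially identical to the paper's proof: the paper takes the subrack $X=\{(x^{a_1},\ldots,x^{a_{t-1}},\ell x^{a_t}):\ \sum_{i=1}^{t}a_i\equiv 0 \bmod k\}$, checks closure under $\trid_u$ using \eqref{eq:cond-uno} to absorb the $\theta$ at the wrap-around step, and identifies $X$ with the affine rack of Lemma \ref{lema:A-g-tipo-D} (or the dihedral rack of Lemma \ref{lema:d2m} when $t=2$) via exactly the map you propose. One small correction to your formula: put $\ell$ on the \emph{left} of the last coordinate, i.e.\ $\ell x^{-\sum a_i}$ rather than $x^{-\sum a_i}\ell$, so that the reversed product $\ell x^{a_t}x^{a_{t-1}}\cdots x^{a_1}$ equals $\ell$ exactly; with your ordering the product is the ordinary conjugate $x^{-m}\ell x^{m}$, which hypothesis \eqref{eq:cond-uno} does not guarantee to lie in $\oc^{L,\theta}_\ell$.
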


When $\theta = \id$, \eqref{eq:cond-uno} just means that $x\in \Cent_{L}(\ell)$.

\pf Let $X=\{(x^{a_{1}},x^{a_{2}},\ldots, x^{a_{t-1}},\ell
x^{a_{t}}):\ \sum_{i=1}^{t}a_{i}\equiv 0 \mod k\}$. Then $X$ is a
subrack of $\C_{\ell}$ since $X\subseteq \C_{\ell}$, by
Proposition \ref{prop:twisted-ger-par} (ii), and
\begin{align*}
(x^{a_{1}},\ldots, \ell x^{a_{t}})&\trid
(x^{b_{1}},\ldots, \ell x^{b_{t}})  =\\
& =(x^{a_{1}},\ldots, \ell x^{a_{t}})
u((x^{b_{1}},\ldots, \ell x^{b_{t}})
(x^{-a_{1}},\ldots, x^{-a_{t}}\ell^{-1}))\\
& = (x^{a_{1}},\ldots, \ell x^{a_{t}})
(\theta(\ell x^{b_{t}-a_{t}}\ell^{-1}),\ldots, x^{b_{t-1}-a_{t-1}})\\
&= (x^{a_{1}+b_{t}-a_{t}},\ldots, \ell x^{a_{t}+b_{t-1}-a_{t-1}}).
\end{align*}
Moreover, let $(A,g)$ be the affine rack considered in Lemma \ref{lema:d2m} or Lemma \ref{lema:A-g-tipo-D}.
Then by the calculation above, there exists a rack isomorphism
$$ (A,g) \xrightarrow{\varphi} X,\qquad
(a_{1},\ldots, a_{t-1}) \mapsto (x^{a_{1}},\ldots, x^{a_{t-1}},
\ell x^{-\sum_{i=1}^{t-1} a_{i}}),$$
which implies that $X$ is of type D, by Lemma \ref{lema:d2m} or Lemma \ref{lema:A-g-tipo-D}.
\epf

\begin{cor}\label{cor:twisted-D-todo-t}
Assume that $t \geq 6$ is even. If $ \ell \in L^\theta$, with
$\ord(\ell)$ even, then $\C_{\ell}$ is of type D.
\end{cor}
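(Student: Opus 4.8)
The plan is to deduce Corollary \ref{cor:twisted-D-todo-t} directly from Lemma \ref{lema:twisted-D-todo-t} by exhibiting an appropriate element $x\in L$ satisfying the three conditions \eqref{eq:cond-uno}, \eqref{eq:cond-dos}, \eqref{eq:cond-tres} (the condition \eqref{eq:cond-cuatro} being vacuous, since we are assuming $t\ge 6$, so in particular $t\ne 2$). The natural candidate is simply $x=\ell$ itself. First I would check \eqref{eq:cond-uno}: since we assume $\ell\in L^\theta$, we have $\theta(\ell)=\ell$, and as noted right after the statement of the lemma, for the choice $x=\ell$ the condition $\theta(\ell x\ell^{-1})=x$ reads $\theta(\ell)=\ell$, which holds. (One can also verify $\theta(\ell\cdot\ell\cdot\ell^{-1})=\theta(\ell)=\ell$ directly.)

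Next I would verify \eqref{eq:cond-dos}. With $x=\ell$ we have $k=\ord(\ell)$, which by hypothesis is even, so $2\mid k$. Since $t$ is even, $2\mid t$ as well, hence $(k,t)\ge 2\ne 1$, which is exactly \eqref{eq:cond-dos}. Finally, condition \eqref{eq:cond-tres} only needs to be checked when $t=4$, but we are assuming $t\ge 6$, so this condition is vacuously satisfied and imposes no constraint on $k$.

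Having checked all the hypotheses of Lemma \ref{lema:twisted-D-todo-t} with $x=\ell$, the conclusion that $\C_\ell$ is of type D follows immediately. I do not anticipate any genuine obstacle here: the only point requiring a moment's care is matching the parity bookkeeping, namely observing that the evenness of both $\ord(\ell)$ and $t$ is precisely what forces $(k,t)\ne 1$, and confirming that the awkward small-$t$ side conditions \eqref{eq:cond-cuatro} and \eqref{eq:cond-tres} are switched off by the standing assumption $t\ge 6$. In short, the corollary is the special case $x=\ell$ of the preceding lemma, tailored to the even/even situation where the auxiliary constraints drop out.
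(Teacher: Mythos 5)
Your proof is correct and is exactly the paper's argument: the paper's proof of this corollary is the one-liner ``Take $x=\ell$ and apply Lemma \ref{lema:twisted-D-todo-t}.'' Your verification of conditions \eqref{eq:cond-uno}--\eqref{eq:cond-tres} for $x=\ell$ (using $\ell\in L^{\theta}$, the evenness of both $\ord(\ell)$ and $t$, and the vacuity of the small-$t$ clauses when $t\ge 6$) just makes explicit what the paper leaves to the reader.
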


\pf Take $x = \ell$ and apply Lemma \ref{lema:twisted-D-todo-t}.\epf

\begin{lema}\label{lema:twisted-2-id-D}
 Assume $ t=2 $ and $ \theta=\id $.
Let $ \ell \in L $. Suppose that there exists $x,y, a\in L$ such that
\begin{equation}\label{eqn:hipotesis-extasis}
\begin{aligned}
\ell &= xy = yx, &\quad  x\neq e& \neq y, &\quad x^{2}&=e,\\ a &\in \Cent_{L}(x)\cap \Cent_{L}(y), &  a^4 &\neq e,
& x &\notin \langle a \rangle.
\end{aligned}
\end{equation}
Then $\C_{\ell}$ is of type D.
\end{lema}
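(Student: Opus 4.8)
The plan is to exhibit inside $\cl$ an explicit decomposable subrack $Y=R\coprod S$ assembled from two ``dihedral'' families indexed by the powers of $a$, and then to verify the type D inequality on it by hand. Write $n:=\ord(a)$ and recall that for $t=2$, $\theta=\id$ the operation reads $(y_1,y_2)\trid(z_1,z_2)=(y_1z_2y_2^{-1},\,y_2z_1y_1^{-1})$, while $\cl=\{(p,q)\in L^2: qp\in\oc^L_\ell\}$ by Proposition \ref{prop:twisted-ger-par}. Since $a$ centralizes $x$ and $y$, the pairs
$$ P_i:=(x a^i,\; y a^{-i}), \qquad Q_j:=(a^j,\; \ell a^{-j}), \qquad i,j\in\Z/n, $$
satisfy $(ya^{-i})(xa^i)=yx=\ell$ and $(\ell a^{-j})(a^j)=\ell$, so all of them lie in $\cl$. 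I set $R=\{Q_j\}_{j\in\Z/n}$ and $S=\{P_i\}_{i\in\Z/n}$.

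First I would record the four multiplication rules, each a direct computation using only that $x,y,a$ commute pairwise and that $x^2=e$ (so that $x^{-1}=x$, $y\ell^{-1}=x$ and $\ell x=y$):
$$ Q_j\trid Q_{j'}=Q_{2j-j'},\qquad P_i\trid P_{i'}=P_{2i-i'},\qquad Q_j\trid P_i=P_{2j-i},\qquad P_i\trid Q_j=Q_{2i-j}. $$
These rules show that $R$ and $S$ are each subracks (indeed copies of the dihedral rack $\D_n$) and that $Y=R\cup S$ is closed under $\trid$, hence a subrack of $\cl$. Moreover the hypothesis $x\notin\langle a\rangle$ is exactly what forces $R\cap S=\emptyset$: an equality $P_i=Q_j$ would require $xa^i=a^j$, i.e.\ $x\in\langle a\rangle$. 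Thus $Y=R\coprod S$ is a genuine decomposition of the subrack $Y$. Conceptually, $Y$ is the affine rack on $\Z/n\times\Z/2$ with $g$ the inversion, graded by the $\Z/2$-factor (the levels $R$ and $S$), which makes it decomposable regardless of the parity of $n$; this is why I do not need to invoke Lemma \ref{lema:d2m} for the conclusion.

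Finally I would test the type D condition with $r=Q_1\in R$ and $s=P_0\in S$. Iterating the rules above, for $r=Q_j$ and $s=P_i$ one computes $r\trid s=P_{2j-i}$, then $s\trid(r\trid s)=P_{3i-2j}$, and finally $r\trid\bigl(s\trid(r\trid s)\bigr)=P_{4j-3i}$. With $j=1$, $i=0$ this is $P_4$, and $P_4=P_0=s$ would mean $4\equiv 0\pmod n$, i.e.\ $a^4=e$. Since $a^4\neq e$, we get $r\trid\bigl(s\trid(r\trid s)\bigr)\neq s$, so $\cl$ is of type D.

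The work here is organizational rather than deep: the genuinely delicate points are the two cross rules $Q_j\trid P_i$ and $P_i\trid Q_j$, which must land back inside $R\cup S$ so that $Y$ is a subrack while $R$ and $S$ stay disjoint. The two numerical constraints play complementary roles — $x\notin\langle a\rangle$ guarantees the disjointness of the decomposition, and $a^4\neq e$ guarantees the final inequality. The remaining hypotheses are subsumed or unused for this construction: $x\notin\langle a\rangle$ already forces $x\neq e$, and only the commutation relations and $x^2=e$ are needed to close the four identities.
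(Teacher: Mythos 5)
Your proof is correct and is essentially the paper's own argument: you use the same two subracks ($Q_j=(a^j,\ell a^{-j})$ and $P_i=(xa^i,ya^{-i})$, which coincide with the paper's $Y$ and $X$ since $a$ centralizes $x$, $y$, $\ell$), the same disjointness argument via $x\notin\langle a\rangle$, and the same final inequality reducing to $a^4\neq e$ (you merely swap which of $r$, $s$ is taken from which piece, which is immaterial). The only cosmetic difference is that you record all four multiplication rules and note the affine-rack interpretation, whereas the paper writes out only the two cross products.
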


\pf
Let $X=\{(x a^{i},a^{-i}y):\ 0\leq i< \ord(a)\}$ and
$Y=\{(a^{i}, a^{-i}\ell):\ 0\leq i< \ord(a)\}$. Both are
subracks of $\C_{\ell}$ by Proposition \ref{prop:twisted-ger-par} (ii)
and the fact that $a \in \Cent_{L}(x)\cap \Cent_{L}(y)$. Since $x\notin \langle a\rangle$, we have that $X\cap Y = \emptyset$. Moreover,
$X\coprod Y$ is a subrack of $\C_{\ell}$ since
\begin{align*}
(xa^{i},a^{-i}y) \trid (a^{j}, a^{-j}\ell) & =
(a^{2i-j}, a^{-2i+j}\ell)\qquad \text{ and }\\
(a^{i},a^{-i}\ell) \trid (xa^{j}, a^{-j}y) & =
(xa^{2i-j}, a^{-2i+j}y).
\end{align*}

Taking $r= (x,y) $ and $s= (a,a^{-1}\ell)$ we get that $r\trid
(s\trid(r\trid s)) = (a^{-3}, a^3\ell)$. Since $a^4 \neq e$ by
assumption, it follows that $\C_{\ell}$ is of type D. \epf

\begin{cor}\label{cor:twisted-2-id-D}
 Assume $ t=2 $ and $ \theta=\id $.
Let $ \ell \in L$. If $\ord(\ell) = 2m$ with $m$ odd, then
$\C_{\ell}$ is of type D.
\end{cor}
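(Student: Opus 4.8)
The plan is to reduce Corollary \ref{cor:twisted-2-id-D} to Lemma \ref{lema:twisted-2-id-D} by producing, from an element $\ell$ of order $2m$ with $m$ odd, the data $x,y,a$ required in \eqref{eqn:hipotesis-extasis}. The natural candidates come from the cyclic subgroup $\langle \ell\rangle$ itself. Since $\ord(\ell)=2m$ with $m$ odd, the integers $2$ and $m$ are coprime, so $\langle\ell\rangle\cong \Z/2 \times \Z/m$ and $\ell$ factors canonically as a product of commuting elements of coprime orders. Concretely, I would set $x = \ell^{m}$, the unique involution in $\langle\ell\rangle$, and $y = \ell^{m+1}$; then $x^2 = \ell^{2m}=e$, and since $m+1$ is even we have $y=\ell^{m+1}\in\langle\ell^2\rangle$, which is the Sylow subgroup of odd order $m$, so $\ord(y)=m$. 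Because everything lives in the abelian group $\langle\ell\rangle$, we automatically have $xy=yx$, and $xy = \ell^{m}\ell^{m+1}=\ell^{2m+1}=\ell$, giving the factorization $\ell = xy = yx$ with $x\neq e$ (as $2m>0$) and $y\neq e$ (as $m>1$; note $m=1$ would give $\ell$ an involution, and one should check whether that degenerate case needs separate handling or is excluded).

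Next I would choose $a$. The cleanest choice is $a = y = \ell^{m+1}$, which has odd order $m$. Taking $a=y$ immediately gives $a\in\Cent_L(x)\cap\Cent_L(y)$ since all these elements commute within $\langle\ell\rangle$. The condition $a^4\neq e$ follows because $a$ has odd order $m>1$, so $\ord(a)=m\geq 3$ cannot divide $4$; hence $a^4\neq e$. The final condition $x\notin\langle a\rangle$ holds because $\langle a\rangle = \langle\ell^{m+1}\rangle$ has odd order $m$, whereas $x=\ell^m$ is an involution, and a group of odd order contains no element of order $2$. With all of \eqref{eqn:hipotesis-extasis} verified, Lemma \ref{lema:twisted-2-id-D} applies and yields that $\C_\ell$ is of type D.

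The main obstacle, and the place where the argument must be examined carefully, is the low-order edge cases, namely small $m$. If $m=1$ then $\ord(\ell)=2$ and $y=\ell^{2}=e$, so $y=e$ violates the hypothesis $y\neq e$; thus the case of $\ell$ an involution genuinely falls outside this construction and is presumably not covered (consistent with the tables, where involutions appear as exceptions). So I would state the corollary as applying with $m$ odd and implicitly $m\geq 3$, or verify that $m=1$ is intended to be handled elsewhere. For $m\geq 3$ odd everything above goes through cleanly, so the substantive content is simply the coprimality observation $2 \perp m$ that lets us split $\ell$ into its $2$-part and its odd part inside the cyclic group it generates.

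I would therefore organize the proof as: (1) record $\ord(\ell)=2m$, $m$ odd, $m\geq 3$; (2) define $x=\ell^m$, $y=\ell^{m+1}$, $a=\ell^{m+1}$ and check $\ell=xy=yx$, $x^2=e$, $x\neq e\neq y$; (3) check $a\in\Cent_L(x)\cap\Cent_L(y)$, $a^4\neq e$, and $x\notin\langle a\rangle$ using that $\langle a\rangle$ has odd order; (4) invoke Lemma \ref{lema:twisted-2-id-D}. The only genuinely nontrivial verification is step (3)'s last condition, which rests entirely on the parity/coprimality structure of $\langle\ell\rangle$, so I expect no serious difficulty beyond being careful with the degenerate involution case.
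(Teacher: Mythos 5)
Your proof is correct and essentially identical to the paper's: the paper takes $x=\ell^m$ and $y=a=\ell^{1-m}$, and since $\ord(\ell)=2m$ one has $\ell^{1-m}=\ell^{m+1}$, so your choices coincide with the paper's exactly. Your caveat about $m=1$ (where $y=e$ and the construction degenerates) is a fair observation that the paper's one-line proof glosses over, but that case does not occur where the corollary is applied.
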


\pf The elements $x = \ell^m$, $y = a = \ell^{1-m}$ satisfy $\ord (y) = m$ and \eqref{eqn:hipotesis-extasis}.
\epf

\bigbreak

Denote by $\mathbb D_{n}= \langle x,y\mid
x^2=1=y^n,\;xyx=y^{-1}\rangle$ the dihedral group of $2n$
elements.

\begin{lema}\label{lema:lea}
Assume $ t=2 $ and $ \theta=\id $. If there exists $\psi:\mathbb
D_{n}\to L$ a group monomorphism, with $n\geq 3$ odd, then
$\mathcal C_{\psi(x)}$ is of type D.
\end{lema}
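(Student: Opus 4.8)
The plan is to observe first that the element $\ell=\psi(x)$ is an involution, so the previous reductions do not apply: Corollary \ref{cor:twisted-2-id-D} requires $\ord(\ell)=2m$ with $m$ odd and degenerates at $m=1$, while Lemma \ref{lema:twisted-2-id-D} requires writing $\ell$ as a product of two commuting non-trivial elements, which is not available for a reflection. Hence the extra leverage must come from the whole dihedral subgroup $\psi(\dn)$, not merely from $\ell$. I would write $P=\psi(x)$ and $Q=\psi(y)$, so that $P^2=e$, $Q^n=e$, $PQP=Q^{-1}$, and recall that for $t=2$, $\theta=\id$ Proposition \ref{prop:twisted-ger-par} gives $\C_{P}=\{(z_1,z_2)\in L^2:\ z_2z_1\in\oc^L_P\}$, with operation coming from \eqref{eqn:twisted-homogeneous} and $u$ the coordinate swap, namely $(A,B)\trid(C,D)=(ADB^{-1},BCA^{-1})$.

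Since $n$ is odd, every reflection $Q^kP$ is conjugate in $\psi(\dn)$, hence in $L$, to $P$, so all $Q^kP$ lie in $\oc^L_P$. I would then introduce the two families
\[
R=\{(Q^i,Q^jP):\ i,j\in\Z/n\},\qquad S=\{(Q^iP,Q^j):\ i,j\in\Z/n\},
\]
both contained in $\C_P$, because the products $z_2z_1$ equal $Q^{j-i}P$ and $Q^{i+j}P$ respectively, which are reflections. A direct computation with the operation above gives
\begin{align*}
(Q^a,Q^bP)\trid(Q^c,Q^dP) &= (Q^{a-b+d},\,Q^{a+b-c}P),\\
(Q^aP,Q^b)\trid(Q^cP,Q^d) &= (Q^{a+b-d}P,\,Q^{-a+b+c}),\\
(Q^a,Q^bP)\trid(Q^cP,Q^d) &= (Q^{a+b+d}P,\,Q^{-a+b-c}),\\
(Q^aP,Q^b)\trid(Q^c,Q^dP) &= (Q^{a-b-d},\,Q^{a+b+c}P).
\end{align*}
The first two lines show that $R$ and $S$ are subracks, and all four lines show that acting by any element of $R\cup S$ sends $R$ into $R$ and $S$ into $S$. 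Since $\psi$ is injective and a rotation is never a reflection in $\dn$, we have $R\cap S=\emptyset$; thus $R\cup S$ is a subrack of $\C_P$ and $R\coprod S$ is a genuine rack decomposition.

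It then remains to exhibit $r\in R$, $s\in S$ violating \eqref{eqn:hypothesis-subrack}. Taking $r=(Q,P)$ and $s=(P,e)$ and iterating the formulas above yields $r\trid(s\trid(r\trid s))=(Q^2P,Q^{-2})$, which differs from $s=(P,e)$ exactly because $Q^2\neq e$, i.e. because $n\geq 3$; more generally, tracking $r=(Q^a,Q^bP)$ and $s=(Q^cP,Q^d)$ one finds equality only on the degenerate locus $b\equiv c$, $a+d\equiv 0 \bmod n$, which is trivially avoided. The crux of the argument is not any individual computation but the structural point that, although $u$ interchanges the two coordinates, the partition of $R\cup S$ according to which coordinate carries the reflection is still preserved by the whole inner action; this is the step I would verify most carefully, since it is what upgrades $R\cup S$ from a mere union of subracks into a decomposition. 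The two hypotheses on $n$ enter precisely here and in the final inequality: $n$ odd makes all reflections $Q^kP$ conjugate (so both families land in $\oc^L_P$) and makes $2$ invertible mod $n$, while $n\geq 3$ forces $Q^2\neq e$, which is what makes \eqref{eqn:hypothesis-subrack} strict. The four $\trid$-products are routine and I would not belabor them.
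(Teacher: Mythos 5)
Your proof is correct and follows essentially the same route as the paper: the paper's decomposition $R=\{(z^i,z^{-i}b_j)\}$, $S=\{(z^{-k}b_l,z^k)\}$ is exactly your partition of pairs according to which coordinate carries the reflection, the disjointness and closure computations agree, and the final witness (the paper takes $r=(1,b_1)$, $s=(b_2,1)$ and gets $(b_1b_2b_1,b_1b_2b_1b_2)\neq s$ because $\psi(y)^2\neq e$) fails to be $s$ for the same reason as your $(Q^2P,Q^{-2})\neq(P,e)$. The only differences are the parameterization and the particular choice of $r,s$; your explicit remark that $n$ odd is what puts all reflections in $\oc^L_{\psi(x)}$ is a point the paper uses implicitly.
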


\pf Let $z=\psi(y)$, $b_1=\psi(x)$ and $b_j=b_1z^{j-1}$, $2\leq j
\leq n$. Then $z^{i}b_{j}=b_{j-i}$ and $b_{i}z^{j}=b_{i+j}$, for
all $1\leq i,j \leq n$.

Let $R = \{(z^{i},z^{-i}b_{j})\mid1\leq i,j\leq n\}$ and $S =
\{(z^{-k}b_{l},z^{k})\mid1\leq k,l\leq n\}$. They are disjoint
since otherwise there exist $i$, $k$, $l\in\{1,\dots,n\}$ such
that $z^i=z^{-k}b_l$; this implies that $z^{i+k}=b_l$, which is a
contradiction because $b_l$ is an involution and $z$ has odd
order. Note that $(z^i,z^{-i}b_{j})=(z^i,b_{j+i})$ and
$(z^{-k}b_{l},z^{k})=(b_{l+k},z^{k})$. Hence $R$ and $S$ are
subracks of $\mathcal C_{\psi(x)}$ since
\begin{align*}
(z^i,b_{j+i})\triangleright (z^k,b_{l+k})& = (z^{2i+j-k-l},
b_{k+j}),\\
(b_{j+i},z^{i})\triangleright (b_{l+k},z^k)&
=(b_{k+j}, z^{2i+j-k-l}).
\end{align*}
Moreover, $T=R\coprod S$ is a decomposable subrack of $\mathcal
C_{\psi(x)}$ because
\begin{align*}
(z^i,b_{j+i})\triangleright (b_{l+k},z^k)& =
(b_{j-k},
z^{-(2i+j-k-l)}),\\
(b_{l+k},x^k)\triangleright (z^i,b_{j+i})& =(z^{-(2k+l-j-i)},
b_{j-k}).
\end{align*}
If we take $r=(1,b_1)$ and s=$(b_2,1)$, then $r\trid (s\trid
(r\trid s))=(b_1b_2b_1,b_1b_2b_1b_2)\neq s$. Therefore, $\mathcal
C_{\psi(x)}$ is of type D. \epf

\bigbreak

\subsection{Twisted homogeneous racks with quasi-real $\ell$}
\label{subsection:quasi-real}

Let $ \ell \in L$ and $j \in \N$. Recall that $\ell$, or $\oc^{L}_{\ell}$, is quasi-real of type $j$
if $\ell^{j}\neq \ell$ and $\ell^{j} \in \oc_{\ell}^{L}$.
For example, if $\ell$ is real, that is $\ell^{-1} \in \oc_{\ell}^{L}$, but not an involution, then
it is quasi-real of type $\ord(\ell) - 1$.

We partially extend this notion to twisted conjugacy classes. We
shall say that $\ell\in L^\theta$ is \emph{$\theta$-quasi-real of
type $j \in \N$} if $\ell^{j}\neq \ell$ and $\ell^{j} \in
\oc_{\ell}^{L, \theta}$. Note that if $\ell\in L^\theta$ is
quasi-real of type $j$ in $L^\theta$, then it is
$\theta$-quasi-real, but the converse is not true. Indeed, if we
take $L=\A_{11}$, $\theta=\iota_{(1\;2)}$ the conjugation by
$(1\,2)$, $\ell=(1\;2)(3\;4)(5\;6\;7\;8\;9\;10\;11)$ and
$y=(1\;4)(2\;3)(5\;6\;8)(7\;10\;9)$, then $y
\rightharpoonup_\theta \ell=\ell^2$, but
$\ell^2\not\in\oc_\ell^{\A_{11}^{\theta}}$.

\begin{lema}\label{lema:twisted-D-todo-t-2}
Assume that $\ell\in L^\theta$ is $\theta$-quasi-real of type $j$.
If $t\geq 3$ or $t=2$ and $\ord(\ell) \nmid 2(1-j)$, then
$\C_{\ell}$ is of type D.
\end{lema}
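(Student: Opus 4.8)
The plan is to exhibit inside $\C_{\ell}$ a decomposable subrack assembled from powers of $\ell$, exploiting that $\theta$ fixes $\langle\ell\rangle$ pointwise (since $\ell\in L^\theta$). First I would record what $\theta$-quasi-realness provides: there is $y\in L$ with $y\ell\theta(y^{-1})=\ell^{j}$, so $\ell^{j}\in\oc^{L,\theta}_{\ell}$, and since $\ell^{j}\neq\ell$ we have $j\not\equiv 1\pmod{k}$, where $k=\ord(\ell)$. By Proposition \ref{prop:twisted-ger-par}(ii), $\C_{\ell}=\C_{\ell^{j}}$, and a tuple $(\ell^{a_1},\dots,\ell^{a_t})$ lies in $\C_{\ell}$ exactly when its product $\ell^{\sum_i a_i}$ belongs to $\oc^{L,\theta}_{\ell}$; in particular the exponent-sums $1$ and $j$ are both admissible.

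Next I would set up an affine model. Let $A=(\Z/k)^{t}$ and let $g\in\Aut(A)$ be the cyclic shift $g(a_1,\dots,a_t)=(a_t,a_1,\dots,a_{t-1})$, and consider the affine rack $(A,g)$ with $a\trid b=g(b)+(\id-g)(a)$. Using $\theta|_{\langle\ell\rangle}=\id$, a direct computation with \eqref{eqn:twisted-homogeneous} shows that $\Phi\colon A\to L^{t}$, $\Phi(a_1,\dots,a_t)=(\ell^{a_1},\dots,\ell^{a_t})$, is injective and intertwines the operation of $(A,g)$ with $\trid$. Since $g$ preserves the coordinate sum and $\Imm(\id-g)=\{a:\sum_i a_i=0\}$, each map $a\trid(-)$ fixes every level set $\{\sum_i a_i=c\}$ setwise. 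Put $R=\Phi(\{\sum_i a_i=1\})$ and $S=\Phi(\{\sum_i a_i=j\})$; these are subracks of $\C_{\ell}$ (their products are $\ell$ and $\ell^{j}$, both in $\oc^{L,\theta}_{\ell}$), disjoint because $j\not\equiv 1\pmod k$, and the previous remark gives $R\trid R\subseteq R$, $S\trid S\subseteq S$, $R\trid S\subseteq S$, $S\trid R\subseteq R$. Hence $T=R\coprod S$ is a decomposable subrack, just as for two distinct cosets of $\Imm(\id-g)$ in Subsection \ref{subsection:afines}.

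It then remains to verify the type D inequality. Writing $\epsilon_i$ for the $i$-th standard basis vector of $A$, I would take $r=\Phi(\epsilon_1)=(\ell,e,\dots,e)$ (sum $1$) and $s=\Phi(j\epsilon_1)=(\ell^{j},e,\dots,e)$ (sum $j$), and compute entirely in $(A,g)$ that
$$r\trid\bigl(s\trid(r\trid s)\bigr)=s+(g-1)(g^{2}+1)(s-r),$$
so that $r\trid(s\trid(r\trid s))\neq s$ is equivalent to $(g-1)(g^{2}+1)(s-r)\neq 0$, where $s-r$ corresponds to $(j-1)\epsilon_1$. Evaluating gives $(g-1)(g^{2}+1)(j-1)\epsilon_1=(j-1)\,(\epsilon_2+\epsilon_4-\epsilon_1-\epsilon_3)$ with indices read modulo $t$; for $t\geq 4$ the four basis vectors are distinct, and for $t=3$ this reduces to $(j-1)(\epsilon_2-\epsilon_3)$. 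In either case it is nonzero because $j\not\equiv 1\pmod k$, so $\C_{\ell}$ is of type D for every $t\geq 3$.

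The degenerate case $t=2$ is where the extra hypothesis must intervene, and I expect this to be the only real obstacle. There $g^{2}=\id$, so $(g-1)(g^{2}+1)=2(g-1)$ and the expression collapses to $2(j-1)(\epsilon_2-\epsilon_1)$, which vanishes precisely when $k\mid 2(j-1)$, that is $\ord(\ell)\mid 2(1-j)$. Thus the argument goes through exactly under the stated assumption $\ord(\ell)\nmid 2(1-j)$; equivalently, for $t=2$ one may view $R$ and $S$ as the two dihedral lines $\{(\ell^{a},\ell^{1-a})\}$ and $\{(\ell^{a},\ell^{j-a})\}$ and recover the same condition directly. This completes the proof.
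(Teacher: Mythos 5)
Your proof is correct and follows essentially the same route as the paper's: the same decomposable subrack $\{\sum_i a_i=1\}\coprod\{\sum_i a_i=j\}$ inside the tuples of powers of $\ell$, with the type-D inequality checked on elements supported in a single coordinate and yielding the same obstruction $\ord(\ell)\mid 2(1-j)$ only at $t=2$. The only difference is presentational: you package the coordinate computation as the affine-rack identity $r\trid(s\trid(r\trid s))=s+(g-1)(g^{2}+1)(s-r)$, whereas the paper writes out the three cases $t\geq 4$, $t=3$, $t=2$ explicitly.
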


\pf
Let
\begin{align*}
X &= \{(\ell^{a_{1}}, \ldots, \ell^{a_{t}}):\ \textstyle \sum_{i=1}^{t}a_{i}=j \},\\
Y &= \{(\ell^{a_{1}}, \ldots, \ell^{a_{t}}):\ \textstyle \sum_{i=1}^{t}a_{i}=1 \}.
\end{align*}
By definition, $X\cap Y = \emptyset$.
We have
\begin{equation}\label{eq:trid}
(\ell^{a_{1}}, \ldots, \ell^{a_{t}})
\trid (\ell^{b_{1}}, \ldots, \ell^{b_{t}}) =
(\ell^{a_{1}+b_{t}-a_{t}}, \ell^{a_{2}+b_{1}-a_{1}},
\ldots, \ell^{a_{t}+b_{t-1}-a_{t-1}}).
\end{equation}
Hence $X$ and $Y$ are subracks of $\C_{\ell}$,
$X\trid Y \subseteq Y$, $Y\trid X \subseteq X$, and $X\coprod Y$ is
a subrack of $\C_{\ell}$.

\medbreak
Take $r= (1,\ldots, 1, \ell^{j}) \in X$ and $s= (1,\ldots, 1, \ell)\in Y$.
We show that $r\trid (s\trid (r\trid s)) \neq s$.
Assume first that $t\geq 4$. Then by
\eqref{eq:trid} we have that
\begin{align*}
r\trid (s\trid (r\trid s)) & =
r\trid (s\trid (\ell^{1-j}, 1, \ldots, 1, \ell^{j})) =
r\trid  (\ell^{j-1},\ell^{1-j}, 1, \ldots, 1, \ell)\\
& = (\ell^{1-j}, \ell^{j-1},\ell^{1-j}, 1, \ldots, 1, \ell^{j})  \neq
  (1,\ldots, 1, \ell).
\end{align*}
If $t=3$, then $r=(1,1,\ell^{j})$, $s= (1,1,\ell)$ and the
computation yields
\begin{align*}
r\trid (s\trid (r\trid s)) & =
r\trid (s\trid (\ell^{1-j},1, \ell^{j})) =
r\trid  (\ell^{j-1},\ell^{-j+1},\ell)\\
&= (\ell^{1-j},\ell^{j-1}, \ell)\neq (1,1,\ell).
\end{align*}
Finally if $t=2$, then $r=(1,\ell^{j})$, $s= (1,\ell)$
and we have
\begin{align*}
r\trid (s\trid (r\trid s)) & =
r\trid (s\trid (\ell^{1-j},  \ell^{j})) =
r\trid  (\ell^{j-1},\ell^{-j+2})
= (\ell^{-2j+2}, \ell^{2j-1}).
\end{align*}
Thus, $\C_{\ell}$ is of type D if $(\ell^{-2j+2}, \ell^{2j-1})
\neq (1,\ell)$ which amounts to $\ord (\ell)   \nmid 2(1-j)$. \epf

\section{Simple twisted homogeneous racks}\label{sect:simple}

Summarizing the results of the previous section, and with the same
notation, we have
\begin{prop}\label{prop:summary}
\renewcommand{\theenumi}{\roman{enumi}}\renewcommand{\labelenumi}{(\theenumi)}
Let $\ell \in L^\theta$.
\begin{enumerate}
\item If $\ell \in L$ is quasi-real of type $j$,
$t\geq 3$ or $t=2$ and $\ord(\ell) \nmid 2(1-j)$, then $\cl$ is of type D.

\item If $\ord(\ell)$ is even and $t \geq 6$ is even, then $\cl$ is of type D.

\item If $\ell$ is an involution, $t$ is odd and
$\oc^{L^\theta}_\ell$ is of type D, then so is $\cl$.

\item If $\ell$ is an involution, $t =4$ and there exists $x\in C_{L^\theta}(\ell)$ with $\ord(x) = 2m > 2$, $m\in \N$,
then $\C_\ell$ is of type D.

\item If $\ell$ is an involution, $t = 2$,  and there exists $x\in C_{L^\theta}(\ell)$ with $\ord(x) = 2m > 4$, $m\in \N$,
then $\C_\ell$ is of type D.

\item If $\ell$ is an involution, $t = 2$, and there exists $\psi:\mathbb D_{n}\to
L^{\theta}$ a group monomorphism, with $n\geq 3$ and
$\ell=\psi(x)$ for some $x\in\dn$ involution, then $\mathcal
C_{\ell}$ is of type D.

\item If $(t, \vert L^\theta\vert)$ is divisible by an odd prime $p$, then $\C_e$ is of type D.

\item If $(t, \vert L^\theta\vert)$ is divisible by $p=2$ and $t \geq 6$, then $\C_e$ is of type D.

\item If $t =4$ and there exists $x\in L^\theta$ with $\ord(x) = 2m > 2$, $m\in \N$,
then $\C_e$ is of type D.

\item If $t = 2$,  and there exists $x\in L^\theta$ with $\ord(x) = 2m > 4$, $m\in \N$,
then $\C_e$ is of type D.
\end{enumerate}
\end{prop}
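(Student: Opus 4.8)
The statement to prove is Proposition~\ref{prop:summary}, which collects the consequences of the various lemmas of Section~\ref{section:general-techniques} in the special situation where $\ell \in L^\theta$. The plan is to verify each of the ten items by citing the appropriate earlier result, after arranging that its hypotheses are met. Items (i)--(vi) concern an arbitrary $\ell \in L^\theta$ (with various extra conditions), while items (vii)--(x) are the specializations to $\ell = e$; the latter should follow from the former by taking $\ell = e$ and noting $e \in L^\theta$ always, with $\oc^{L,\theta}_e = \{e\}$ unchanged.

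First I would dispatch item (i) directly from Lemma~\ref{lema:twisted-D-todo-t-2}: since $\ell$ is quasi-real of type $j$ in $L$, it is in particular $\theta$-quasi-real of type $j$ (the remark preceding that lemma notes quasi-real in $L^\theta$ implies $\theta$-quasi-real, but here we only need quasi-real in $L$ together with $\ell \in L^\theta$, so one must check $\ell^j \in \oc^{L,\theta}_\ell$ follows from $\ell^j \in \oc^L_\ell$ — this is the one place requiring a small argument). Item (ii) is exactly Corollary~\ref{cor:twisted-D-todo-t}. For item (iii) I would invoke Remark~\ref{obs:corta-diagonal} together with the observation in Subsection~\ref{subsect:diagonal}: when $t$ is odd and $\ell$ is an involution in $L^\theta$, one checks there is $a \in L^\theta$ with $a^t = \ell$ (take $a = \ell^{(t+1)/2}$, which lies in $L^\theta$ and satisfies $a^t = \ell^{t(t+1)/2} = \ell$ since $t(t+1)/2$ is odd), hence $\oc^{L^\theta}_a \cong \oc^{L^\theta}_\ell \hookrightarrow \cl$, and type D of the former forces type D of the latter.

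Items (iv) and (v) I would derive from Lemma~\ref{lema:twisted-D-todo-t} applied \emph{inside} $L^\theta$: given the centralizing element $x$ of even order $2m$, one has $x \in \Cent_{L^\theta}(\ell) \subseteq \Cent_L(\ell)$ and $\theta(x) = x$, so condition~\eqref{eq:cond-uno} holds (as $\theta$ fixes both $\ell$ and $x$); the order condition $k = \ord(x)$ satisfies $(k,t) \neq 1$ because $t$ is even and $k$ is even, and the exceptional constraints~\eqref{eq:cond-cuatro},~\eqref{eq:cond-tres} are exactly guaranteed by $2m > 2$ when $t = 4$ and $2m > 4$ when $t = 2$. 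Item (vi) is Lemma~\ref{lema:lea} read in $L^\theta$ rather than $L$ (the lemma is stated for $\theta = \id$, $t = 2$, so here one works within the group $L^\theta$, where $\theta$ restricts to the identity on the image of $\psi$). Finally items (vii)--(x) follow by setting $\ell = e$: (vii) and (viii) reduce to (i)--(iii) via the fixed-point count and the divisibility of $(t, |L^\theta|)$, using that an odd prime $p \mid |L^\theta|$ produces an element of order $p$ to play the role of $x$ (this is where Corollary~\ref{cor:twisted-D-todo-t} and Lemma~\ref{lema:twisted-D-todo-t} get fed the right $x$), while (ix) and (x) are the $\ell = e$ cases of (iv)--(v).

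The main obstacle I anticipate is bookkeeping rather than mathematical depth: the subtlety in item (i) of passing from $\ell^j \in \oc^L_\ell$ to $\ell^j \in \oc^{L,\theta}_\ell$ (equivalently, finding $y$ with $y\ell\,\theta(y^{-1}) = \ell^j$), and, more seriously, in items (vii)--(viii) the need to extract from the divisibility hypothesis on $(t, |L^\theta|)$ an element $x \in L^\theta$ whose order has the right common factor with $t$ and satisfies the exclusions in~\eqref{eq:cond-cuatro}--\eqref{eq:cond-tres}. Cauchy's theorem supplies an element of prime order $p$ dividing $|L^\theta|$; for odd $p$ one applies Lemma~\ref{lema:twisted-D-todo-t} with $\ell = e$ (so~\eqref{eq:cond-uno} is vacuous), and for $p = 2$ with $t \geq 6$ one instead uses Corollary~\ref{cor:twisted-D-todo-t}, which is why the $p=2$ case carries the extra hypothesis $t \geq 6$. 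Assembling these cases and confirming that each exceptional small-order constraint is respected is the part that needs care.
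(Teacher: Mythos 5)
Your overall route coincides with the paper's: each item is obtained by citing the appropriate result of Section~\ref{section:general-techniques} --- Lemma~\ref{lema:twisted-D-todo-t-2} for (i), Corollary~\ref{cor:twisted-D-todo-t} for (ii), the diagonal embedding of Subsection~\ref{subsect:diagonal} for (iii), Lemma~\ref{lema:lea} for (vi), and Lemma~\ref{lema:twisted-D-todo-t} with a suitable $x$ for the remaining items. Two of your steps, however, would fail as written. In (iii) you take $a=\ell^{(t+1)/2}$; since $\ell$ is an involution this equals $e$ whenever $(t+1)/2$ is even, i.e.\ for all $t\equiv 3\pmod 4$ (already $t=3$ gives $a=e$ and $a^{t}=e\neq\ell$), and your claim that $t(t+1)/2$ is odd is false for those $t$. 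The correct choice is simply $a=\ell$: as $t$ is odd and $\ell^{2}=e$ one has $\ell^{t}=\ell$, so $(\ell,\dots,\ell)\in\cl$ and $\oc^{L^\theta}_{\ell}$ embeds in $\cl$, which is exactly what the paper does. In (viii) you route the case $p=2$ through Corollary~\ref{cor:twisted-D-todo-t}; that corollary requires $\ord(\ell)$ to be even, and $\ell=e$ has order $1$, so it does not apply. Instead one applies Lemma~\ref{lema:twisted-D-todo-t} directly to $\ell=e$ with $x\in L^{\theta}$ of order $2$ supplied by Cauchy's theorem: then $(2,t)=2\neq 1$ and the exceptional constraints \eqref{eq:cond-cuatro}, \eqref{eq:cond-tres} are vacuous precisely because $t\geq 6$, which is the actual role of that hypothesis.

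On the positive side, the caveat you raise about item (i) is legitimate and is glossed over by the paper: the stated hypothesis is ``quasi-real in $L$'' while Lemma~\ref{lema:twisted-D-todo-t-2} needs $\ell^{j}\in\oc^{L,\theta}_{\ell}$, and these notions do not coincide in general for $\theta\neq\id$. In the paper's applications this causes no harm, since either $\theta=\id$ or the $\theta$-quasi-real property is verified directly, but item (i) should indeed be read with that caveat. With the two repairs above your argument is the paper's argument.
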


\pf (i) is Lemma \ref{lema:twisted-D-todo-t-2}; (ii) is Corollary
\ref{cor:twisted-D-todo-t}; (iii) follows from the discussion in
Subsection \ref{subsect:diagonal}, since $(\ell, \dots, \ell) \in
\cl$. Case (vi) follows by Lemma \ref{lema:lea}. Finally, the
remaining cases follow from Lemma \ref{lema:twisted-D-todo-t}: in
cases (vii) and (viii), take $x$ of order $p$, and in cases (iv),
(v), (ix) and (x) the given $x$.
\epf

We now explore when the proposition applies to a simple twisted
homogeneous rack, that is when $L$ is simple.

\begin{obs}\label{obs:simple-par}
If $L$ is simple non-abelian and $\theta = 1$, $t\neq 4,2$ and
$(t, \vert L\vert)\neq 1$, then $\C_e$ is of type D. This follows
from Proposition \ref{prop:summary} (vii) and (viii).
\end{obs}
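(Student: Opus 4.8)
The plan is to derive the statement directly from Proposition~\ref{prop:summary}, since the remark is explicitly advertised as an immediate consequence of parts (vii) and (viii). First I would observe that the hypotheses place us in the case $\theta = 1$, so that $L^\theta = L$ and hence $(t, |L^\theta|) = (t, |L|)$; this lets me apply the relevant parts of the proposition with $L$ itself playing the role of the fixed-point subgroup. The condition $(t, |L|) \neq 1$ means that $t$ and $|L|$ share a common prime divisor $p$, and the goal is to show that in each admissible case this $p$ can be used to invoke the proposition.

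Next I would split into cases according to the parity of $p$. If $p$ is odd, then $(t, |L|)$ is divisible by an odd prime, so Proposition~\ref{prop:summary}(vii) applies and $\C_e$ is of type D. If $p = 2$, then $t$ is even; combined with the hypothesis $t \neq 2, 4$, this forces $t \geq 6$, so that $(t, |L|)$ is divisible by $p = 2$ with $t \geq 6$, and Proposition~\ref{prop:summary}(viii) applies to conclude that $\C_e$ is of type D. The role of excluding $t = 2, 4$ is precisely to guarantee that an even value of $t$ lands in the range $t \geq 6$ covered by part (viii); for $t \in \{2,4\}$ the even case is handled by the separate parts (ix) and (x), which require extra structural hypotheses not assumed here.

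The only genuinely non-routine point to verify is that such a common prime $p$ can always be chosen to be either odd or, if only $p=2$ is available, that $t \geq 6$ still holds — but this is immediate: if $(t,|L|)$ has any odd prime factor we take it, and otherwise $(t,|L|)$ is a power of $2$, forcing $t$ even, whence $t \geq 6$ by the exclusion $t \neq 2, 4$. Since $L$ is simple non-abelian, $|L|$ is in particular divisible by at least one prime in either branch, so there is no degenerate subcase to worry about. I do not expect any real obstacle here; the remark is a straightforward bookkeeping corollary, and the main content has already been absorbed into Lemma~\ref{lema:twisted-D-todo-t} and its packaging in Proposition~\ref{prop:summary}. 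The proof is essentially a one-line case distinction on the parity of a common prime divisor of $t$ and $|L|$.
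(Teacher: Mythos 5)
Your proposal is correct and is exactly the argument the paper intends: since $\theta=1$ gives $(t,|L^\theta|)=(t,|L|)$, an odd common prime divisor invokes Proposition \ref{prop:summary}(vii), and if the only common prime is $2$ then $t$ is even and the exclusion $t\neq 2,4$ forces $t\geq 6$, so (viii) applies. The case analysis on the parity of the common prime is the same one-line bookkeeping the paper leaves implicit.
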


\subsection{$L = \mathbb{A}_{n}$, $n\ge 5$, $\theta = \id$}
\label{subsect:An-id}

\bigbreak

In this subsection we prove our main Theorem \ref{th:thr-typeD} in
this case by applying all the results obtained above.

\subsubsection{$\ell = e$}\label{subsubsec:e} We now treat the
Table \ref{tab:MainThm:An:id}.
\begin{itemize}
 \item If $t\geq 6$ even or $t$ odd with $(t, n!)\neq 1$, then $\C_e$ is of type
 D, by Lemma \ref{lema:twisted-D-todo-t}.

\item Assume that $t=4$. If $n\geq 6$, then $\C_e$ is
of type D, by Proposition \ref{prop:summary} (ix) with
$x=(1\;2)(3\;4\;5\;6)$. If $n=5$, we do not know if $\C_e$ is
of type D.

\item Assume that $t=2$. If $n \geq 7$, then $\C_e$ is of
type D, by Proposition \ref{prop:summary} (x) with $x
=(1\;2)(3\;4)(5\;6\;7)$. If $n=5$ or $6$, then Proposition
\ref{prop:summary} (x) does not apply because the only
possible even orders of elements are 2 and 4. Moreover, we
have checked that $\C_e$ is not of type D, using \textsf{GAP}.
\end{itemize}

\subsubsection{$\ell$ an involution}\label{subsubsec:involuciones}
The type of $\ell$ is $(1^{r_1},2^{r_2})$, with $n=r_1+2r_2$ and
$r_2$ even. Assume that $\ell = (i_1\;i_2)(i_3\;i_4)\cdots
(i_{2r_2-1}\;i_{2r_2})$. Then
\begin{itemize}
  \item By \cite[Thm. 4.1]{AFGV},
  $\oc^{L}_\ell$ is of type D,
  except for the following cases:
\begin{align*}
(1, 2^2), \,\, n =5; \qquad (1^2, 2^2), \,\, n =6; \qquad(2^4), \,\,n =8.
\end{align*}
In particular, $\C_{\ell}$ is of type D for all $t$ odd,
except for the cases above.

\item If $t \geq 6$ is even, then $\cl$ is of type D,
by Proposition \ref{prop:summary} (ii).

\item Assume that $t=4$. If $r_2\geq 4$, then
$\C_{\ell}$ is of type D by Proposition \ref{prop:summary}
(iv) with
\begin{align}\label{eq:x}
x=(i_1\;i_2)(i_3\;i_5\;i_7\;i_4\;i_6\;i_8).
\end{align}
Suppose that $r_2= 2$. If $r_1\geq 2$, then $\C_{\ell}$ is of
type D by Proposition \ref{prop:summary} (iv) with
$x=(i_1\;i_3\;i_2\;i_4)(j_1\;j_2)$, where $j_1$, $j_2$ are
fixed by $\ell$. In the case $r_1=1$, Proposition
\ref{prop:summary} (iv) does not apply because the only
possible even order of elements is 2.

\medbreak

\item Assume that $t=2$. If $r_2\geq 4$, then
$\C_{\ell}$ is of type D by Proposition \ref{prop:summary} (v)
with $x$ as in \eqref{eq:x}. Suppose that $r_2= 2$. If
$r_1\geq 3$, then $\C_{\ell}$ is of type D by Proposition
\ref{prop:summary} (v) with
$x=(i_1\;i_2)(i_3\;i_4)(j_1\;j_2\;j_3)$, where $j_1$, $j_2$,
$j_3$ are fixed by $\ell$. In the case $r_1=1$ or $2$,
$\C_\ell$ yields of type D, by Lemma \ref{lema:lea} taking
$\psi:\mathbb D_3\simeq\langle x:=(1\; 2) (3\;4), (1\; 2)
(3\;5)\rangle \hookrightarrow L$.

\end{itemize}

\subsubsection{$\ord(\ell) > 2$}\label{subsubsec:otras}
It is known that:
\begin{equation}\label{eq:quasirealAn}
\begin{aligned}
&\text{$\oc^{L}_\ell$ is quasi-real of type 4, if $\ord(\ell)  > 3$ is odd,}\\
&\text{$\oc^{L}_\ell$ is quasi-real of type $\ord(\ell) - 1$, if
$\ord(\ell)$ is even or 3.}
\end{aligned}
\end{equation}

Now, if $t> 2$ or $t=2$ and $\ord(\ell)\neq 4$, then $\C_{\ell}$
is of type D, by Proposition \ref{prop:summary} (i). On the other
hand, if $t=2$ and $\ord(\ell)= 4$, then Proposition
\ref{prop:summary} (i) does not apply and we do not know if $\cl$
is of type D.


\subsection{$L = \mathbb{A}_{n}$, $n\ge 5$,  $\theta \neq \id$}
\label{subsect:An-no-id}

\bigbreak

We may suppose that $\theta$ is the inner automorphism associated
with an element $\sigma$ in $\s_n -\A_n$. We choose
$\sigma=(1\;2)$; thus, $\theta(\ell)=(1\;2)\,\ell\,(1\;2)$, for
all $\ell\in \A_n$. Hence,
\begin{align}\label{eq:An^theta}
\A_n^\theta =\big(\{(1\;2)\} \times
(\s_{\{3,...,n\}}-\A_{\{3,...,n\}})\big) \coprod
\A_{\{3,...,n\}}
\end{align}
and the order of $L^{\theta}$ is $(n-2)!$.

Let $\ell\in\A_n$. Recall that $\C_{\kappa}=\C_{\ell}$ if and only
if $\kappa\in\oc_{\ell}^{\A_n,\theta}$, by Proposition
\ref{prop:twisted-ger-par} (ii), and notice that
$\kappa\in\oc_{\ell}^{\A_n,\theta}$ amounts to $\kappa(1\;2)\in
\oc_{\ell(1\;2)}^{\s_n}$, the conjugacy class of $\ell(1\;2)$ in
$\s_n$. Thus, we will proceed with our study according to the type
$(1^{s_1},2^{s_2},\dots,n^{s_n})$ of $\ell (1\;2)$.

Notice that $\ord(\ell (1\;2))$ is even, since $\ell
(1\;2)\in\s_n-\A_n$, and $\ord(\ell (1\;2))\geq 4$ if and only if
$s_h\geq 1$, for some $h\geq 4$, or $s_2\geq 1$ and $s_3\geq 1$.

\medbreak

Now, we want to give a description analogous to
\eqref{eq:quasirealAn} for the case $\theta \neq \id$. First, we
note that $\oc_\ell^{\A_n,\theta}\cap \A_n^{\theta}\neq \emptyset$
if and only if $s_1\geq 2$ or $s_2\geq 1$. Indeed, this follows
from the form of $\A_n^{\theta}$ described in \eqref{eq:An^theta}.

Assume that $s_1\geq 2$ or $s_2\geq 1$ and let
$\kappa\in\oc_\ell^{\A_n,\theta}\cap \A_n^{\theta}$. Since
$\oc_{\ell}^{\A_n,\theta}=\oc_{\ell(1\;2)}^{\s_n}\cdot (1\;2)$, it
is clear that
\begin{align}\label{eq:relTCC-CC}
\text{$\kappa$ is $\theta$-quasi-real of type $j$ if and only if
$\kappa^j(1\;2)\in \oc_{\ell(1\;2)}^{\s_n}$.}
\end{align}
If $\ord(\ell(1\;2))\geq 4$, then $\kappa(1\;2)$ is quasi-real of
type $j$ in $\s_n$ with $j=\ord(\ell(1\;2))-1$, by
\eqref{eq:quasirealAn}. Thus, $\kappa^j(1\;2)=(\kappa(1\;2))^j$
and $\kappa^j(1\;2)\in\oc_{\ell(1\;2)}^{\s_n}$. Hence, $\kappa$ is
$\theta$-quasi-real of type $j$, by \eqref{eq:relTCC-CC};
moreover, $\kappa$ yields quasi-real of type $j$ in $\A_n^\theta$.
On the other hand, if $\ord(\ell(1\;2))= 2$, then
$\kappa\in\oc_{\ell(1\;2)}^{\s_n}$ is $\theta$-quasi-real (of type
$j=0$) if and only if $\oc_{\ell}^{\A_n,\theta}$ contains to $e$,
i.~e. $\oc_{\ell(1\;2)}^{\s_n}$ is the conjugacy class of the
transpositions in $\s_n$.

\medbreak

We study now the twisted homogeneous racks $(\mathbb{A}_{n},
t,\theta)$, $n\ge 5$ and $\theta\neq \id$, for different $t's$
according to the type $(1^{s_1},2^{s_2},\dots,n^{s_n})$ of $\ell
(1\;2)$. If $s_1\leq 1$ and $s_2=0$, then
$\oc_\ell^{\A_n,\theta}\cap \A_n^{\theta}=\emptyset$, and we do
not know if $\C_{\ell}$ is of type D.

\emph{From now on we will assume that $s_1\geq 2$ or $s_2\geq 1$
and let $\kappa\in\oc_\ell^{\A_n,\theta}\cap \A_n^{\theta}$.}
Notice that $\C_{\ell}=\C_{\kappa}$ and that $\ell(1\;2)$ and
$\kappa(1\;2)$ have the same type, hence the same order. Moreover,
if we denote the type of $\kappa$ by
$(1^{r_1},2^{r_2},\dots,n^{r_n})$, then $r_h=s_h$, for all $h$,
$3\leq h\leq n$; thus, $\ord(\kappa)$ divides $\ord(\ell(1\;2))$.

We will consider different cases. If $s_h\geq 1$ for
 $h=3$ or $5\leq h\leq n$, then $\ord(\ell(1\;2))> 4$ and
$\cl$ is of type D for all $t$, by Proposition \ref{prop:summary}
(i). Assume that $s_h=0$ for $h=3$ and $5\leq h\leq n$. Suppose
that $s_4\geq 1$; thus $\ord(\ell(1\;2))= 4$. If $t\geq 3$, then
$\cl$ is of type D for all $t$, by Proposition \ref{prop:summary}
(i); whereas if $t=2$, we do not know if $\cl$ is of type D.
Assume that $s_4=0$. Thus, the type of $\ell(1\;2)$ is
$(1^{s_1},2^{s_2})$, with $s_1\geq 2$ or $s_2\geq 1$ odd.

Suppose that $s_2=1$; thus, $e\in\oc_\ell^{\A_n,\theta}$. If
$t\geq 3$, then $\cl=\C_e$ is of type D for all $t$, by
Proposition \ref{prop:summary} (i); whereas if $t=2$, $n\geq 7$
and take $\kappa=e$, then $\C_{\ell}=\C_e$ is of type D, by
Proposition \ref{prop:summary} (x) choosing
$x=(1\;2)(3\;4)(5\;6\;7)$.

Suppose that $s_2>1$ odd. Then the type of
$\kappa\in\oc_\ell^{\A_n,\theta}\cap \A_n^{\theta}$ is
$(1^{r_1},2^{r_2})$, with $n=r_1+2r_2$ and $r_2\geq 2$ even, i.~e.
$\kappa = (i_1\;i_2)(i_3\;i_4)\cdots (i_{2r_2-1}\;i_{2r_2})$. We
determine now when $\oc_{\kappa}^{\A_n^{\theta}}$ is of type D. We
have two possibilities.
\begin{itemize}
\item[(i)] Assume that $\kappa$ fixes 1 and 2. If the type
$(1^{r_1-2},2^{r_2})$ is distinct to $(1,2^2)$, $(1^2,2^2)$
and $(2^4)$, then $\oc_\kappa^{\A_n^{\theta}}$ is of type D,
by \cite[Thm. 4.1]{AFGV}; otherwise,
$\oc_\kappa^{\A_n^{\theta}}$ is not of type D.

\item[(ii)] Assume that $\kappa$ does not fix 1 nor 2; thus $\kappa=(1 2)(i_3 i_4)\cdots (i_{2r_2-1} i_{2r_2})$. 
If the type $(1^{r_1},2^{r_2-1})$ is distinct to $(2^3)$ and
$(1^{r_1},2)$, then $\oc_\kappa^{\A_n^{\theta}}$ is of type D,
by \cite[Thm. 4.1]{AFGV}; otherwise,
$\oc_\kappa^{\A_n^{\theta}}$ is not of type D.
\end{itemize}

\bigbreak

We consider now different values of $t$.

\begin{itemize}

 \item Assume that $t$ is odd. If $\kappa$ fixes 1 and 2 and the type of $\kappa$ is distinct to
$(1^3,2^2)$, $(1^4,2^2)$ and $(1^2,2^4)$, then $\C_{\ell}$ is
of type D, by (i) above and Proposition \ref{prop:summary}
(iii). On the other hand, if $\kappa$ does not fix 1 nor 2 and
the type of $\kappa$ is distinct to $(2^4)$ and
$(1^{r_1},2^2)$, for any $r_1$, then $\C_{\ell}$ is of type D,
by (ii) above and Proposition \ref{prop:summary} (iii).

\medbreak

\item If $t\geq 6$ even, then $\C_{\ell}$ is of type D, by Proposition
\ref{prop:summary} (ii).

\medbreak

\item Assume that $t=4$. We will determine when there exists $x\in
\A_n^\theta$ such that $\ord(x)\geq 4$ even and $\theta(\kappa
x\kappa)=x$, i.~e. $(1\;2)\kappa x\kappa (1\;2) =x$. If
$\kappa$ fixes 1 and 2, take $x=(1\;2)(i_1\;i_3\;i_2\;i_4)$.
If $\kappa(1)=2$ and $\kappa(2)=1$, take
$x=(1\;2)(i_3\;i_5\;i_4\;i_6)$ when $r_2\geq 4$ and
$x=(1\;j_1\;2\;j_2)(i_3\;i_4)$ when $r_2=2$ and $r_1\geq 2$,
$j_1$, $j_2$ are fixed by $\kappa$. In all these cases, $\cl$
is of type D, by Lemma \ref{lema:twisted-D-todo-t}. For the
remaining cases we do not know if $\cl$ is of type D.

\medbreak

\item Assume that $t=2$. We will determine when there exists $x\in
\A_n^\theta$ such that $\ord(x)\geq 6$ even and $\theta(\kappa
x\kappa)=x$, i.~e. $(1\;2)\kappa x\kappa (1\;2) =x$. If
$\kappa$ fixes 1 and 2, take
$x=(1\;2)(i_3\;i_5\;i_7\;i_4\;i_6\;i_8)$ when $r_2\geq 4$ and
$x=(1\;i_1\;i_3\;2\;i_2\;i_4)(j_1\;j_2)$ when $r_2=2$ and
$r_1\geq 4$, $j_1$, $j_2$ are fixed by $\kappa$. If
$\kappa(1)=2$ and $\kappa(2)=1$, take
$x=(1\;2)(i_3\;i_5\;i_7\;i_4\;i_6\;i_8)$ when $r_2\geq 4$ and
$x=(1\;2)(i_3\;i_4)(j_1\;j_2\;j_3)$ when $r_2=2$ and $r_1\geq
3$, $j_1$, $j_2$, $j_3$ are fixed by $\kappa$.
\end{itemize}

\bigbreak

Therefore the cases where $\cl$ is not known to be of type D are
the following
\begin{itemize}
\item[(a)] $\ell(1\;2)$ of type $(1^{s_1},2^{s_2},\dots,n^{s_n})$, $s_1\leq 1$ and $s_2=0$ for any
$t$;
\item[(b)] $\ell(1\;2)$ of type $(1^{s_1},2^{s_2},4^{s_4})$, $s_1\leq 2$ or $s_2\geq 1$, $s_2+s_4$ odd, $s_4\geq 1$ and
$t=2$;
\end{itemize}
and those in Table \ref{tab:An:noid}. Finally, from (a), (b) and
Table \ref{tab:An:noid} we obtain Table \ref{tab:MainThm:An:noid}.

\begin{table}[th]
\begin{center}
\caption{}\label{tab:An:noid}
\begin{tabular}{|c|c|c|c|} 
\hline  $n$ & Type of $\ell$ & $\ell$ & $t$ \\

\hline 6 & $(1^2,2^2)$  & involution & 2 \\

7 & $(1^3,2^2)$ & fixing 1 and 2 & 2, odd \\ 

8 & $(1^4,2^2)$ &  & odd\\ 

10 & $(1^2,2^4)$ &  & odd\\ 

\hline

 5 & $(1,2^2)$  & involution & 2, 4 \\
 6 & $(1^2,2^2)$  & permuting 1 and 2 & 2 \\

8  &  $(2^4)$  &  & odd \\ 

\hline
\end{tabular}
\end{center}
\end{table}

\subsection{$L = $ sporadic group, $\theta = \id$}\label{subsect:spor-id}

In this subsection we prove our main Theorem
\ref{th:thr-typeD-espo}.

\subsubsection{$\ell = e$}\label{subsubsec:e:espo}

\begin{itemize}

  \item If $(t, |L|)\neq 1$, with $t$ odd or $t\geq 6$ even, then $\C_e$ is of type D; see
Table \ref{tab:spor:primos} for the prime numbers dividing the
order of a sporadic group. In particular, if $t\geq 6$ even,
then $\C_e$ is of type D since $|L|$ is even.


\begin{table}[ht]
\begin{center}
\caption{Prime divisors of orders of sporadic
groups.}\label{tab:spor:primos}
\begin{tabular}{|p{1.69cm}|p{3.79cm}||p{1cm}|p{4.5cm}|}
\hline $L$ & {\bf Prime divisors} & $L$ & {\bf Prime divisors}
\\ \hline  $M_{11}$, $M_{12}$ &  2, 3, 5, 11  &
$Co_{1}$ & 2, 3, 5, 7, 11, 13, 23
\\ \hline   $M_{22}$, $HS$, \newline $McL$ & 2, 3, 5, 7, 11  &
$J_{1}$, \newline $HN$ & 2, 3, 5, 7, 11, 19
\\ \hline  $M_{23}$, $M_{24}$, \newline $Co_{2}$, $Co_{3}$  &  2, 3, 5, 7, 11, 23  &  $O'N$ & 2, 3, 5, 7, 11, 19, 31
\\ \hline  $J_{2}$ &  2, 3, 5, 7  &
$J_{3}$ & 2, 3, 5, 17, 19
\\ \hline   $Suz$, $Fi_{22}$ & 2, 3, 5, 7, 11, 13   &    $Ru$ & 2, 3, 5, 7, 13, 29
\\ \hline  $T$ & 2, 3, 5, 13 &  $Fi_{23}$ &  2, 3, 5, 7, 11, 13, 17, 23
\\ \hline  $He$ & 2, 3, 5, 7, 17  &  $Fi'_{24}$ &     2, 3, 5, 7, 11, 13, 17, 23, 29
\\ \hline    $Th$ & 2, 3, 5, 7, 13, 19, 31 &     $B$ &    2, 3, 5, 7, 11, 13, 17, 19, 23, 31, 47
\\ \hline   $J_4$ &      2, 3, 5, 7, 11, 23, 29, 31, 37, 43 &   $M$ &  2, 3, 5, 7, 11, 13, 17, 19, 23, 29, 31, 41, 47, 59, 71
\\ \hline  $Ly$ &   2, 3, 5, 7, 11, 31, 37, 67 &&

\\ \hline
\end{tabular}
\end{center}
\end{table}


  \item If $t=2$ or $t=4$, then $\C_e$ is of type D, by Proposition \ref{prop:summary} (ix) and (x), since always there exists an element $x\in L$
  of order 6. 
\end{itemize}

\subsubsection{$\ell$ an involution}\label{subsubsec:involuciones:espo}

\begin{itemize}

  \item By \cite[Thm. II]{AFGV-espo}, if $\ell$ is an involution then, $\oc^{L}_\ell$ is of type D,
  except for the cases listed in Table \ref{tab:0}; in
particular, $\C_{\ell}$ is of type D for all $t$ odd, except
for these cases.

\begin{table}[ht]
\begin{center}
\caption{Classes of involutions not known of type D;
\newline those which are NOT of type D appear in bold.}\label{tab:0}
\begin{tabular}{|p{2cm}|c||p{1cm}|c||p{1cm}|c|}
\hline $G$ & {\bf Classes} & $G$ & {\bf Classes}& $G$ & {\bf Classes}
\\ \hline  $J_{2}$ &  \textup{\bf 2A} & $Fi_{22}$ &  \textup{{\bf 2A}} & $Co_{2}$ &  \textup{{\bf 2A}}
\\ \hline  $B$ &  \textup{2A, 2C} &   $Fi_{23}$ &  \textup{{\bf 2A}}   &  $T$ &  \textup{2A}
\\ \hline
\end{tabular}
\end{center}
\end{table}

\item If $t \geq 6$ is even, then $\cl$ is of type D,
by Proposition \ref{prop:summary} (ii).

  \item If $t=2$ or $4$, then $\C_{\ell}$ is of type
  D by Proposition \ref{prop:summary} (iv) and (v), since
  always there exists $x\in \Cent_{L}(\ell)$ with $\ord(x)>4$
  even. To see this we use
  \cite{BR,Bo,Br,GAP,Iv,W,AtlasRep1.4}.
\end{itemize}

\subsubsection{$\ord(\ell) > 2$}\label{subsubsec:otras:espo}
The class $\oc^{L}_\ell$ is real or quasi-real, except the classes
\textup{6B, 6C} of the Suzuki group $Suz$.

\begin{itemize}
  \item Assume that $\ell$ do not belong to the class \textup{6B} or \textup{6C} of the Suzuki group
  $Suz$. If $t\geq 3$ or $t=2$ and $\ord(\ell)\neq 4$, then
  $\C_{\ell}$ is of type D, by Proposition \ref{prop:summary}
  (i).

\item Suppose that $\ell$ belongs to the class 6B or  6C of  the Suzuki group $Suz$.
If $t\geq 6$ even, then $\C_{\ell}$ is of type D by
Proposition \ref{prop:summary} (ii); whereas if $t =2$, then
$\C_{\ell}$ is of type D by Corollary
\ref{cor:twisted-2-id-D}.


\end{itemize}

\subsection{$L  =$ sporadic, $\theta \neq \id$}\label{subsect:spor-no-id}

The sporadic groups with non-trivial outer automorphism group are
$M_{12}$, $M_{22}$, $J_2$, $Suz$, $HS$, $McL$, $He$, $Fi_{22}$,
$Fi'_{24}$, $O'N$, $J_3$, $T$ and $HN$. For these groups the outer
automorphism group is $\Z/2$ in all cases. In Table
\ref{tab:L^thetaesp} we give the orders of $L^\theta$ when $L\neq
HN$; we cannot determine the order of $HN^{\theta}$ with our
computational resources. We will assume $L\neq HN$.


\begin{table}[ht]
\begin{center}
\caption{Orders of $L^\theta$.}\label{tab:L^thetaesp}
\begin{tabular}{|c|c||c|c||c|c|}
\hline $L$ & $|L^\theta|$ & $L$ & $|L^\theta|$ & $L$ & $|L^\theta|$\\
\hline $M_{12}$& 120 & $M_{22}$ & 1344& $J_2$  & 336 \\

\hline $Suz$& 1209600 & $HS$ & 40320 & $McL$ & 7920 \\

\hline $He$& 7560 & $Fi_{22}$ & 54 & $Fi'_{24}$ & 4089470473293004800 \\

\hline $T$ & 96   & $O'N$& 175560 & $J_3$  &  2448\\
\hline
\end{tabular}
\end{center}
\end{table}

We describe the case $\ell = e$. The remaining cases will be
treated in a separated paper.
\begin{itemize}
\item If $t\geq 6$ even or $t\geq 3$ odd and $(t,|L^\theta|)\neq 1$,
then $\C_e$ is of type D, by Proposition \ref{prop:summary}
(vii) and (viii). In particular, if $t\geq 6$ even, then
$\C_e$ is of type D since $|L^\theta|$ is even.

 \item Assume that $t=2$ or $t=4$. We have checked
with \textsf{GAP} that there exists $x\in L^\theta$ such that
$\ord(x)>4$ even. Then $\C_e$ is of type D, by Proposition
\ref{prop:summary} (ix) and (x).
\end{itemize}

\end{document}